\def\setoddRH{\hbox to \textwidth{\fs.7.8.\tabcolsep0pt
  \begin{tabular*}{\textwidth}[b]{l@{\extracolsep\fill}r}
  {\theRRHFirstLine}&\\ {}&  \raisebox{0pt}[0pt][0pt]{\fs.10.10.\thepage}\\[-4pt]
  \rlap{\VRHDW{0.5pt}{0pt}{\textwidth}}&\\
  \end{tabular*}}}
\def\setevenRH{\hbox to \textwidth{\fs.7.8.\tabcolsep0pt
  \begin{tabular*}{\textwidth}[b]{l@{\extracolsep\fill}r}
  &{\theLRHFirstLine}\\ \raisebox{0pt}[0pt][0pt]{\fs.10.10.\thepage}&{}\\[-4pt]
  \rlap{\VRHDW{0.5pt}{0pt}{\textwidth}}&\\
  \end{tabular*}}}
\let\footnote=\endnote
\newcommand{\djw}[1]{{\color{black} #1}}
\newcommand{\lp}[1]{{\color{black} #1}}
\newcommand{\rj}[1]{{\color{black} #1}}
\newcommand{\todo}[1]{{}}
\newcommand{\ignore}[1]{}
\newcommand{\reals}{\ensuremath{\mathbb{R}}}
\newcommand{\E}{\ensuremath{\mathbb{E}}}
\renewcommand{\P}{\ensuremath{\mathbb{P}}}
\renewcommand{\epsilon}{\varepsilon}
\renewcommand{\v}[1]{\ensuremath{\boldsymbol{\mathrm{#1}}}}
\renewcommand{\mc}[1]{\ensuremath{\mathcal{#1}}}
\newcommand{\cF}{\ensuremath{\mathcal{F}}}
\newcommand{\FWER}{\ensuremath{\mathsf{FWER}}}
\newcommand{\FDR}{\ensuremath{\mathsf{FDR}}}
\newcommand{\CI}{\ensuremath{\mathsf{CI}}}
\newcommand{\1}{\ensuremath{\mathbbm{1}}}
\begin{document}


\RUNAUTHOR{Johari, Pekelis, and Walsh}

\RUNTITLE{Always Valid Inference}

\TITLE{Always Valid Inference: Continuous Monitoring of A/B Tests}

\ARTICLEAUTHORS{%
\AUTHOR{Ramesh Johari\thanks{RJ was a technical advisor to Optimizely, Inc., when this work was carried out.}}
\AFF{Department of Management Science and Engineering, Stanford University, \EMAIL{rjohari@stanford.edu}} 
\AUTHOR{Leo Pekelis\thanks{LP was employed by Optimizely, Inc., when this work was carried out.}}
\AFF{OpenDoor, Inc, \EMAIL{lpekelis@gmail.com}
}
\AUTHOR{David Walsh\thanks{DJW was employed by Optimizely, Inc., when this work was carried out.}}
\AFF{Department of Statistics, Stanford University, \EMAIL{dwalsh@stanford.edu}}
} 

\ABSTRACT{%
A/B tests are typically analyzed via frequentist p-values and confidence intervals; but these inferences are wholly unreliable if users endogenously choose samples sizes by {\em continuously monitoring} their tests.  We define {\em always valid} p-values and confidence intervals that let users try to take advantage of data as fast as it becomes available, providing valid statistical inference whenever they make their decision.  Always valid inference can be interpreted as a natural interface for a sequential hypothesis test, which empowers users to implement a modified test tailored to them.  In particular, we show in an appropriate sense that the measures we develop tradeoff sample size and power \rj{efficiently}, despite a lack of prior knowledge of the user's relative preference between these two goals.  We also use always valid p-values to obtain multiple hypothesis testing control in the sequential context.  Our methodology has been implemented in a large scale commercial A/B testing platform to analyze hundreds of thousands of experiments to date.
}%



\maketitle

%


\section{Introduction}
\label{sec:introduction}

\begin{figure*}
\begin{center}
\includegraphics[width=0.8\textwidth]{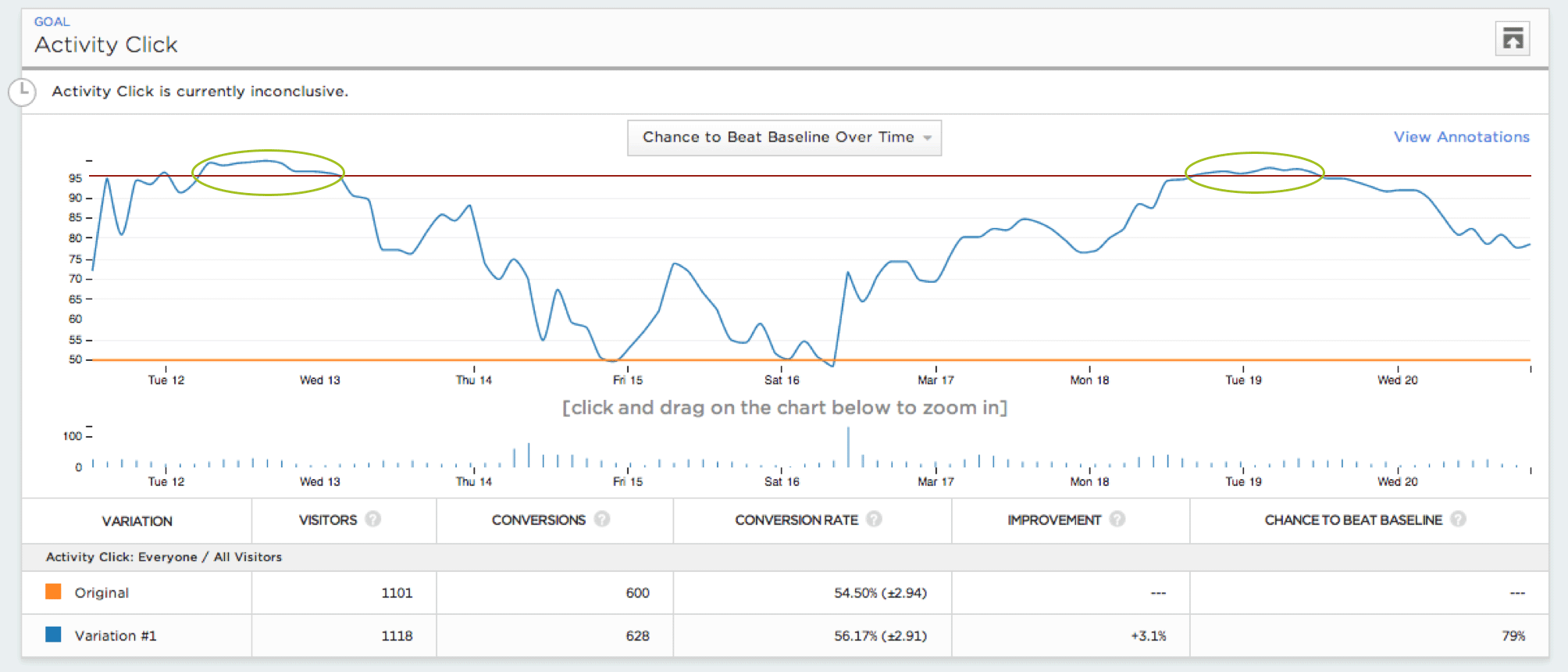}
\end{center}
\caption{A typical dashboard from a large commercial A/B testing platform service.  The graph depicts the ``chance to beat baseline'' of a test, which measures $1 - p_n$ over time, where $p_n$ is the p-value after $n$ observations of the null hypothesis that the clickthrough rate in treatment and control is identical.
This particular test is a {\em A/A test}: both the treatment and control are the {\em same}.  The graph shows that $1-p_n$ rises above the 95\% significance threshold if the user continuously monitors the test, triggering a Type I error.}
\label{fig:aatest}
\end{figure*}

This paper reports on novel statistical methodology underlying the implementation of a large-scale commercial experimentation platform for web applications and services.\footnote{The platform is named {\em Optimizely} (\url{http://www.optimizely.com}).  The methodology of this paper forms the core of the statistical backend of the Optimizely platform.  Throughout the paper, we refer to the platform as ``a large commercial A/B testing platform'' or similar.}  Web applications typically optimize their product offerings using randomized controlled trials (RCTs); in industry parlance this is known as {\em A/B testing}.  The rapid rise of A/B testing has led to the emergence of a number of widely used platforms that handle the implementation of these experiments \citep{kohavi2013online,tang2010overlapping}.

A standard A/B test with two variations ({\em control} and {\em treatment}) involves testing the {\em null hypothesis} that both groups share the same parameter (e.g., customer conversion rate from clicking on a link to a sale) against the {\em alternative hypothesis} that they are different.  The A/B testing platform communicates experimental results to the user via frequentist parameter testing measures, i.e., p-values and confidence intervals. Besides well known properties of their optimality \citep[see, e.g.,][]{lehmann1986testing}, these measures imply an exceptionally simple ``user interface''; the user implements a mechanistic rule (threshold the p-value at their desired Type I error rate $\alpha$) that does not require further knowledge of experiment details. This has two valuable consequences, underpinning their ubiquity in industrial practice: first, the same interface can be employed by {\em many} users, each with their own $\alpha$; and second, experimental results can be analyzed by users without advanced statistical training.



These desirable outcomes only obtain, however, when p-values and confidence intervals are used as intended.  In A/B testing practice, users are not constrained to simply analyze the output of an experiment; they can also adjust the experimental design in response to the data observed.  This type of behavior can entirely undermine statistical reliability. A particularly pervasive form of this behavior is commonplace: users {\em continuously monitor} the p-values and confidence intervals reported in order to set the sample size of an experiment dynamically \citep{miller_blogpost}.  Figure \ref{fig:aatest} shows how a typical dashboard enables such behavior.

The incentive to continuously monitor experiments is strong because the opportunity cost of longer experiments is large. There is value to detecting true effects as quickly as possible, or giving up if it appears that no effect will be detected soon so that they may test something else. Most users further lack good prior understanding of their cost elasticity as well as the effect size they seek, frustrating attempts to optimize run-time in advance. Indeed, the ability to trade off maximum detection with minimum run-time dynamically is a crucial benefit of the availability of real-time data in modern A/B testing environments.


No correction for continuous monitoring is typically made in industrial practice. Consequently, the resulting feedback loop between the statistical output and the experimental design  undermines inferential validity, as computations are performed under the naive assumption of their independence.  Very high false positive probabilities are obtained---well in excess of the nominal $\alpha$ that the user can tolerate. Even with 10,000 samples (a sample size which is quite common in online A/B testing), Type I error can easily increase fivefold.


Our challenge is the following: can we deliver \rj{efficient} inference in a simple interface, but in an environment where users continuously monitor experiments, and where their priorities regarding run-time and detection are not known in advance? {\em We develop a statistical approach that addresses this challenge, and report on an implementation in a commercial A/B testing platform.} A key to our framework is that we employ the {\em same interface} as a traditional A/B testing platform: we present the user with p-values and confidence intervals. However, our measures have the following properties.  {\em First}, Type I error is controlled under any data-dependent rule the user might choose to stop the experiment (i.e., any stopping time for the data). Continuous monitoring does not inflate Type I error.  {\em Second}, in a sense we make precise below, we show that if the user stops when our p-value drops below her personal $\alpha$, the resulting rule approximately obtains \rj{an efficient} trade-off for run-time and detection, even with no advance knowledge of her preferences.

In more detail, our contributions are as follows.  

\begin{enumerate}
\item {\em Type I error control: Always valid p-values and confidence intervals}.  Our first contribution is to develop p-values that control Type I error in a strong sense.  In particular, we ask: what p-value processes control Type I error at any data-dependent stopping time the user might choose?  We refer to such p-values as {\em always valid} p-values (and analogously, always valid confidence intervals).  We show how these always valid p-value processes can be constructed using sequential hypothesis tests \citep{wald1945sequential, siegmund1985sequential, lai2001sequential, siegmund1978estimation}; in particular, we identify a duality between always valid p-values and those sequential tests that do not accept $H_0$ in finite time, known as {\em sequential tests of power one} \citep{RS74}.  Under this duality, the natural policy of stopping the first time that the always p-value process crosses the level $\alpha$ implements the corresponding sequential test of power one.  In this way we retain the simple ``user interface'' of p-values, but guarantee Type I error control under continuous monitoring.

\item {\em Efficiently trading off power and run-time: The mixture sequential probability ratio test}.  Having controlled Type I error, we then ask: how can we efficiently trade off power and run-time?  The challenge for an A/B testing platform, as noted above, is that users' objective functions---and in particular, their relative prioritization of run-time and detection---are not typically known in advance.  \rj{We aim to find always valid p-value processes that lead to an efficient tradeoff for the user.}

It is evident that without restricting the class of user models we consider, no meaningful result is possible; the space of potential user preferences over run-time and detection is vast (and some are unreasonable, e.g., preferring long experiments that do not detect effects).  Instead, we focus on users that generally want short tests and good detection, modeled as follows: the user stops at either the first time the p-value process crosses $\alpha$, or at a fixed maximum {\em failure time} $M$, whichever comes first.  A larger $M$ indicates greater patience, and a corresponding preference for detection of smaller effects.  While this class does not capture all possible objective functions, it does allow us to capture what we consider to be the most interesting axis of user heterogeneity: how much they care about power versus how much they care about run-time.

Our main contribution is to demonstrate that always valid p-values derived from a particular class of sequential tests known as {\em mixture sequential probability ratio tests} (mSPRTs) \citep{robbins1970statistical} achieve an efficient tradeoff between power and run-time for such users to first order in $M$ as $M \to \infty$.  We achieve this result for settings where the data is generated from a single-parameter exponential family.  \rj{This result provides evidence that the mSPRT can produce always valid p-values that yield valuable inference for most users, albeit in the limit where $M$ is very large.  We therefore complement our theory with empirical analysis that compares the mSPRT to other sequential tests at finite values of $M$.  These empirical results demonstrate that the mSPRT delivers high performance in the regime relevant for practical application.}


\rj{The mSPRT involves stopping the first time a mixture of the likelihood ratio of alternative to null crosses a threshold, where the mixture is over potential values of the alternative.  Although first order efficiency as $M \to \infty$ holds for any mixing distribution, the mixing distribution plays an important role in second order performance.  In particular, as power approaches one when $M \to \infty$ for mSPRTs, we choose among this class by optimizing the mixing distribution to minimize run-time.  Formally, in a Bayesian setting (where effect sizes are drawn from a prior), we find a particular choice of mSPRT that minimizes expected run-time in the limit where $M \to \infty$.  The run-time minimizing choice of mSPRT has an appealingly simple form: e.g., when there is a Gaussian prior on effect sizes, the optimal mSPRT approximately matches the variance of the mixing distribution to the variance of the prior distribution of effect sizes.  We also complement this theoretical investigation with numerics to study the sensitivity of performance to the choice of mixing distribution at finite values of $M$.  We find that although the mixing distribution can have a significant impact on performance, there is also robustness, in that a well-chosen mixing distribution can deliver good performance in a wide-range of conditions.}

\item {\em Implementation in a commercial A/B testing platform}.  As noted above, a key contribution of our work is the deployment of our methods in a commercial A/B testing platform, used by thousands of customers worldwide.  The main technical challenge in implementation is that our results above on optimality of the mSPRT are derived in a {\em single-stream} setting, where a single unknown parameter of a data-generating process is being tested; by contrast, in A/B testing, since there are two variations (control and treatment), we are in a {\em two-stream} setting.  We extend our work to this setting, essentially by treating additional parameters besides the true treatment effect as nuisance parameters; this is the method that is deployed in the platform.  We also report both empirically and theoretically on a comparison of our methodology with classical fixed horizon testing.  We find that our approach can deliver equivalent power as fixed horizon testing, with a sublinear sample size, suggesting that even statistically savvy users would receive results faster using our approach in practice.

We \djw{note} further that in a companion conference paper, we provide greater detail on the implementation of our work in the large commercial A/B testing platform described above; the reader is referred to \cite{johari2017kdd}.

\item {\em Multiple hypothesis testing}.  Finally, we note that a major practical advantage of always valid p-values is that we can employ them within existing methodology that uses these measures as inputs to provide error guarantees in multiple hypothesis testing, despite the sequential nature of A/B testing.    In particular, we can control the {\em family-wise error rate} (FWER), as well as the {\em false discovery rate} (FDR) in the sequential setting under some assumptions on the user's stopping time.  We also study {\em false coverage rate} (FCR) control for confidence intervals.  This combination of sequential testing and multiple hypothesis testing corrections is novel to our work, and enabled because of the introduction of always valid p-values.  The resulting FDR- and FCR-controlling procedures have also been implemented in the commercial service described above, as part of the same deployment.
\end{enumerate}


Taken together, we preserve the benefits of p-values and confidence intervals, while modernizing their computation to account for continuous monitoring as well as multiple hypothesis testing.  \rj{The remainder of the paper is organized as follows.  In Section \ref{sec:related}, we present related literature.  In Section \ref{sec:preliminaries}, we describe our basic model and notation.  In Section \ref{sec:always_valid}, we introduce always valid p-values and confidence intervals.  In Section \ref{sec:optimality}, we study the design of efficient always valid p-value processes via the use of the mSPRT, both theoretically and empirically.  In Section \ref{sec:abtests}, we discuss details of deployment and implementation, and particularly the adaptation of our basic theory to two streams of data.  In Section \ref{sec:multiple}, we discuss multiple hypothesis testing.  Finally, we conclude in Section \ref{sec:conclusion}.}

\section{Related work}
\label{sec:related}

\djw{\subsection{Sequential hypothesis testing}}

The desire to test hypotheses using data that arrives sequentially over time is far from new.  Rather, sequential analysis is a mature field with roots dating back to \cite{wald1945sequential}, and sequential tests are widely used in areas such as pharmaceutical clinical trials. For its history, methodology and theory, we direct the reader to the encyclopedic resource of \cite{ghosh1991handbook}; \rj{see also \cite{siegmund1985sequential} for an introduction to the topic.}

\djw{In fact, there has been recent interest in implementing existing sequential tests in online experimentation \citep{miller2015simple}. However, adoption has been limited, because the tests function as ``black boxes'' until the single stopping time when the null hypothesis is accepted or rejected. Inference at general stopping times is not attainable. Thus an off-the-shelf sequential test can provide value for a single experimenter only if the test is specifically optimized to her preferences over power and run-time. On the other hand, by using sequential tests as building blocks to construct always valid p-values and confidence intervals, this paper obtains a real-time interface that can better handle users with heterogeneous preferences.}

\djw{\subsection{Sequential confidence intervals and the LIL}

\label{sec:related_lil}

Our always valid confidence intervals are not the first attempt to construct sequences of intervals which contain an unknown parameter with uniformly high probability over an infinite time-horizon. While our always valid measures emerge from a hypothesis testing framework, such intervals may derived by appealing directly to the Law of the Iterated Logarithm (LIL), which governs the asymptotic behavior of large deviations in sample averages. This approach dates back to \cite{darling1967confidence} and \cite{robbins1970statistical}, where the constructed intervals are referred to as ``confidence sequences". Since then, LIL-type bounds have been obtained for broader classes of (suitably normalized) martingales; for a comprehensive summary, we defer to \cite{pena2008self}. Further, work such as \cite{balsubramani2014sharp} has tightened the LIL at finite samples, while \cite{zhao2016adaptive} extends various classical concentration inequalities to hold at data-dependent stopping times. Independent of our work in this paper, related work has leveraged these LIL-type bounds for the construction of always valid confidence intervals for a range of parametric and non-parametric settings \citep{balsubramani2015sequential, howard2018uniform}.

In Section \ref{sec:comparison_emp}, we compare the empirical performance of our always valid measures derived from the mSPRT against these LIL-based intervals as vehicles for trading off power against run-time. We note that the LIL provides intervals whose size shrinks at a faster asymptotic rate than the mSPRT, and so offers better inference for users who are willing to wait until power very close to one is achieved. For most users, however, our empirical investigation suggests the stronger finite sample performance of the mSPRT would result in a more efficient trade-off. For this reason, we consider it the most appropriate choice for an A/B testing platform.}

\djw{
\subsection{Bandit algorithms}

\rj{While the work described in this paper largely addresses A/B testing carried out using the toolkit of statistical inference (i.e., hypothesis testing), it is worth noting that A/B testing is often used to find the best alternative. For example, a marketing team may be interested in testing two designs for the same web page, with the only goal to maximize conversion rate.  The latter type of problem has been extensively studied in the literature on {\em multi-armed bandits} \citep{bubeck2012regret, lattimore2018bandit}.} \djw{This has led to some use of bandit algorithms in industry in place of hypothesis testing \citep{scott2015multi}.}

\rj{Two variants of the bandit problem formulation are relevant: the {\em pure exploration} problem (when the rewards during the experiment are ignored) \citep{bubeck2009pure}, and the more widely studied {\em regret minimization} problem where the rewards earned during experimentation are also optimized as part of the objective function.  The pure exploration problem is more relevant to the industrial experimentation scenarios that inspired this paper, as in these situations the experimentation period is relatively short compared to the lifespan of any ``winning'' feature that is deployed.  Therefore the rewards earned during experimentation itself can be safely ignored.  In solving the pure exploration problem, the allocation of treatments to incoming traffic is modified dynamically, in order to provide a decision at some minimal stopping time on which treatment is the best.

We note here that much of the interest in sequential confidence intervals has arisen in connection with the literature on pure-exploration bandits, where the target error bounds are of a different flavor than the Type I error rate control sought in this paper. While we seek to control the probability of falsely detecting any treatment effect when no such effect exists, the typical focus in that literature is on {\em probably almost correct} (PAC) bounds \citep{even2002pac, kalyanakrishnan2012pac}. There it is only necessary to identify the winning treatment with high probability in the case that a treatment effect exists and exceeds some pre-specified threshold. For instance, \cite{kaufmann2014complexity} uses such a formulation to characterize the complexity of pure-exploration problems. \cite{jamieson2014lil} and \cite{abbasi2011improved} each present bandit algorithms, which improve regret by leveraging sequential confidence intervals that are always valid in the PAC sense. PAC-style bounds are typically not sufficient for an A/B testing platform, because the threshold on treatment effects sought cannot be tailored to the heterogeneous users.

Finally, in light of this discussion on bandits, it is worth noting why our own work emphasizes the hypothesis testing viewpoint as opposed to a pure-exploration multi-armed bandit approach.  The main point we make here is that at the same time as the best alternative is sought prospectively, there is also a post-experiment interest in {\em inference}; in particular, the experimenter often wants a confidence interval on the effect size of a variation that does not win.  Part of the reason is operational: if the gain of a winning variation is insufficient over the status quo, the deployment cost may not be worthwhile.  Such confidence intervals are also strategically important, as they provide guidance on what types of experiments may be worth trying in the future.  Our observation is that the hypothesis testing framework remains the dominant form of A/B testing in industrial deployment, at least partly for such reasons, and for this reason we have focused our attention on this setting. However, bandit methods are also practically valuable, and developing methods for inference with adaptive allocation remain interesting directions for future work.  We return to this point in our conclusion, Section \ref{sec:conclusion}.}

\subsection{Sequential multiple hypothesis testing}}

There has also been recent interest in achieving multiple hypothesis testing controls in sequential contexts.  For the most part, work in this area considers a different form of streaming data to the one described in this paper: \djw{\cite{demets1994interim}, \cite{foster2008alpha} and \cite{javanmard2016online} provide so-called ``$\alpha$-spending'' and ``$\alpha$-investing" methods to control the family-wise error rate (FWER) or the false discovery rate (FDR) when experiments are performed sequentially, but within each experiment the data is accessed only once.}

\djw{However, recent work has started to address the within-experiment data arrival process that characterizes A/B testing. \cite{yang2017framework} combines $\alpha$-investing with sequential hypothesis testing to enable FDR control in this regime. \cite{jamieson2018bandit} goes a step further, allocating traffic to treatments dynamically with the goal of achieving statistical significance quickly, while still bounding FDR. \cite{malek2017sequential} investigates when always valid p-values can be used to achieve other multiple testing bounds beyond FWER and FDR.}


\section{Preliminaries}
\label{sec:preliminaries}

To begin, we suppose that our data can be modeled as independent observations from an exponential family $\v{X} = (X_n)_{n =1}^\infty \stackrel{iid}{\sim} F_\theta$, where the parameter $\theta$ takes values in $\Theta \subset \reals^p$. Throughout the paper, $(\mc{F}_n)_{n = 1}^\infty$ will denote the filtration generated by $(X_n)_{n =1}^\infty$ and $\P_\theta$ will denote the measure (on any space) induced under the parameter $\theta$. Our focus is on testing a simple null hypothesis $H_0: \theta = \theta_0$ against the composite alternative $H_1: \theta \neq \theta_0$.  (In Section \ref{sec:abtests} we adapt our analysis to two-sample hypothesis testing, as is needed to test differences between control and treatment in an A/B test.)

{\bf Decision rules}.  In general, a decision rule is a pair $(T, \delta)$, where $T$ is a (possibly infinite) stopping time for $(\mc{F}_n)_{n = 1}^\infty$ that denotes the sample size at which the test is ended, and $\delta$ is a binary-valued, $(\mc{F}_T)$-measurable random variable, where $\delta = 1$ indicates that $H_0$ is rejected; note that $\delta = 0$ must hold a.s.~if $T = \infty$.  Note that we allow the possibility that the decision rule can be data-dependent; when $T$ is not data-dependent, we refer to the rule as a {\em fixed horizon} decision rule.

{\bf Type I error}.  Type I error is the probability of erroneous rejection under the null, i.e., $\P_{\theta_0}(\delta = 1)$.  We assume that the user wants to bound Type I error at level $\alpha \in (0,1)$.

{\bf Sequential tests}.  Given $\alpha$, we typically consider a family of decision rules parameterized by $\alpha$.  Formally, a {\em sequential test} is a family of decision rules $(T(\alpha), \delta(\alpha))$ indexed by $\alpha \in (0,1)$ such that:
\begin{enumerate}
\item The decision rules are {\em nested}: $T(\alpha)$ is a.s.~nonincreasing in $\alpha$, and $\delta(\alpha)$ is a.s.~nondecreasing in $\alpha$.
\item For each $\alpha$, the Type I error is bounded by $\alpha$: $\P_{\theta_0}(\delta = 1) \leq \alpha$.
\end{enumerate}
Note that sequential tests allow the possibility that the decision rules are data-dependent, though strictly speaking fixed horizon decision rules are allowed in this definition as well.


{\bf Fixed horizon testing}.  Under the default fixed horizon testing approach, we restrict to decision rules $(n,\delta)$, where the stopping time is required to be deterministic.  In this setting, the objective is to maximize the power (the probability of detection under $H_1$) at that $n$.  Indeed, for data in an exponential family, for any given $n$, there exist a family of uniformly most powerful (UMP) tests parameterized by $\alpha$, each of which maximizes power uniformly over $\theta$ among tests with Type I error rate $\alpha$.  These tests reject the null if a particular test statistic $\tau_n$ exceeds a threshold $k(\alpha)$ \citep[see, e.g., Chapter 4 of ][]{lehmann1986testing}.

While the tests maximize power for the given $n$, the power increases as $n$ is increased, and so the user must choose $n$ to trade off power against the opportunity cost of waiting for more samples.  The challenge for the user is that the power is a steep function of the true $\theta$, so good advance knowledge on the size of the effect sought is required.

{\bf The fixed horizon user interaction model}.  Testing platforms typically allow users to implement their optimal test via {\em p-values}.  Specifically, the p-value at time $n$ corresponding to the UMP test is:
\[ p_n = \inf \{ \alpha : \tau_n \geq k(\alpha) \}. \]
In other words, this p-value is the {\em smallest $\alpha$ such that the $\alpha$-level test with sample size $n$ rejects $H_0$.}

The process $p_n$ provides sufficient information for the user to implement her desired test with ease: she waits for her chosen $n$, and rejects the null hypothesis if $p_n \leq \alpha$.  In addition, $p_n$ ensures transparency in the following sense: since each rule $\delta_n(\alpha)$ controls Type I error at level $\alpha$, any other user can threshold the p-value obtained at her own appropriate $\tilde{\alpha}$ level to satisfy her desired Type I error bound.

In fact, to control Type I error, we require only that the p-value is {\em super-uniform}:
\begin{equation}
\label{eq:fh_pvalues}
\forall s \in [0,1], \,\, \P_0(p_n \leq s) \leq s.
\end{equation}
More generally, we refer to any $[0,1]$-valued, $(\mc{F}_n)$-measurable random variable $p_n$ that satisfies \eqref{eq:fh_pvalues} as a {\em fixed horizon p-value} process for the choice of sample size $n$.

Confidence intervals can be constructed from the tests $\delta_n(\alpha)$ associated with fixed horizon p-values for $H_0 : \theta = \tilde{\theta}$ at each $\tilde{\theta} \in \Theta$ by considering the set of $\theta$ that are not rejected.  What matters is the following coverage bound: a {\em $(1 - \alpha)$-level fixed horizon confidence interval} is any $(\mc{F}_n)$-measurable random set $\CI_n \subset \Theta$ where
\begin{equation}
\forall \theta \in \Theta, \,\, \P_\theta (\theta \in \CI_n) \geq 1 - \alpha.
\end{equation}

\section{Always valid inference}
\label{sec:always_valid}

Our goal is to let the user stop the test whenever they want, in order to trade off power with run-time as they see fit; the p-value they obtain should control Type I error.  Our first contribution is the definition of {\em always valid} p-values as those processes that achieve this control.

\begin{definition}
\label{def:av}
A sequence of fixed horizon p-values $(p_n)$ is an {\em always valid p-value} process if given any (possibly infinite) stopping time $T$ with respect to $(\mc{F}_n)$, there holds:
\begin{equation}
\label{eq:avpval}
\forall s \in [0,1], \,\, \P_{\theta_0} (p_T \leq s) \leq s.
\end{equation}
\end{definition}

The following theorem demonstrates that always valid p-values are in a natural correspondence with sequential tests.

\begin{theorem}
\label{thm:av}
\begin{enumerate}
\item Let $(T(\alpha), \delta(\alpha))$ be a sequential test.  Then
$$ p_n = \inf \{ \alpha : T(\alpha) \leq n, \delta(\alpha) = 1 \} $$
defines an always valid p-value process.
\item For any always valid p-value process $(p_n)_{n=1}^\infty$, a sequential test $(\tilde{T}(\alpha), \tilde{\delta}(\alpha))$ is obtained from $(p_n)_{n=1}^{\infty}$ as follows:
\begin{align}
\tilde{T}(\alpha) &= \inf \{ n : p_n \leq \alpha \}; \label{eq:T_alpha} \\
\tilde{\delta}(\alpha) &= \1 \{ \tilde{T}(\alpha) < \infty\}. \label{eq:delta_alpha}
\end{align}
If $(p_n)_{n=1}^\infty$ was derived as in part (1) and $T = \infty$ whenever $\delta = 0$, then $(\tilde{T}(\alpha), \tilde{\delta}(\alpha)) = (T(\alpha), \delta(\alpha))$.
\end{enumerate}
\end{theorem}

\begin{proof}{\rj{Proof of Theorem \ref{thm:av}.}}
For the first result, nestedness implies the following identity for any $s \in [0,1], n \geq 1, \epsilon > 0$:
$$ \{p_n \leq s\} \subset \{T(s + \epsilon) \leq n, \delta(s + \epsilon) = 1\} \subset \{\delta(s + \epsilon) = 1\}. $$
Therefore:
$$ \P_{\theta_0} (p_T \leq s) \leq \P_{\theta_0} ( \cup_n \{p_n \leq s\}) \leq \P_{\theta_0} (\delta(s + \epsilon) = 1) \leq s + \epsilon. $$
The result follows on letting $\epsilon \to 0$. For the converse, it is immediate from the definition that the tests are nested and $\delta(\alpha) = 0$ whenever $T(\alpha) = \infty$.  For any $\epsilon > 0$
\begin{equation} \nonumber \begin{split} \P_{\theta_0} (\delta(\alpha) = 1) = \P_{\theta_0} (T(\alpha) < \infty) &\leq \P_{\theta_0} (p_{T(\alpha)} \leq \alpha + \epsilon) \leq \alpha + \epsilon \end{split} \end{equation}
where the last inequality follows from the definition of always validity. Again the result follows on letting $\epsilon \to 0$.\hfill$\Box$
\end{proof}

The p-value defined in part (1) of the theorem is not the unique always valid p-value associated with that family of sequential tests (i.e., for which part (2) holds). However, among such always valid p-values it is a.s.~{\em minimal} at every $n$, resulting from the fact that it is a.s.~{\em monotonically non-increasing} in $n$.  Thus we have a one-to-one correspondence between monotone always valid p-value processes and families of sequential tests that do not give up for failure; i.e., where $\delta = 0$ implies $T = \infty$. These processes can be seen as the natural representation of those sequential tests in a streaming p-value format.

{\bf The new user interaction model.} The time $\tilde{T}(\alpha)$ represents the natural stopping time of a hypothetical user who incurs no opportunity cost from longer experiments. By thresholding the p-value at $\alpha$ at that time, she recovers the underlying sequential test and is able to reject $H_0$ whenever $\delta = 0$. Of course, a real user cannot wait forever, so she must stop the test and threshold the p-value at some potentially earlier, a.s.~finite stopping time. In so doing, she sacrifices some detection power. This trade-off for the user between power and average run-time is a central concern of our proposed design, and is studied in more detail in Section \ref{sec:optimality}.

{\bf Confidence intervals}. Always valid CIs are defined analogously and may be constructed from always valid p-values just as in the fixed horizon context. Proposition \ref{prop:pval_CI_duality} follows immediately from the definitions.

\begin{definition}
A sequence of fixed-horizon $(1-\alpha)$-level confidence intervals $(\CI_n)$ is an {\em always valid \djw{$(1-\alpha)$-level} confidence interval process} if, given any stopping time $T$ with respect to $(\mc{F}_n)$, there holds:
\begin{equation}
\label{eq:avCI}
\forall \theta \in \Theta, \,\, \P_\theta(\theta \in \CI_T) \geq 1 - \alpha.
\end{equation}
\end{definition}

\begin{proposition}
\label{prop:pval_CI_duality}
Suppose that, for each $\tilde{\theta} \in \Theta$, $(p_n^{\tilde{\theta}})$ is an always valid p-value process for the test of $\theta = \tilde{\theta}$.  Then
\[ \CI_n = \left \{ \theta : p_n^{\theta} > \alpha \right \}  \]
is an always valid $(1-\alpha)$-level CI process.
\end{proposition}

\section{\rj{Efficient always valid p-values via the mSPRT}}
\label{sec:optimality}

\rj{As noted in the preceding section, users who continuously monitor experiments are making a dynamic tradeoff between two objectives: detecting true effects with maximum probability and minimizing the typical length of experiments.  A significant challenge for the platform is that always valid p-values must be designed without prior knowledge of the user's preferences.  We are led, therefore, to consider the following design problem: how should always valid p-values be designed to lead users to an efficient tradeoff between power and run-length, without access to the user's preferences in advance?

In this section we first introduce a natural model of user behavior that encodes a tradeoff between power and run-length, where users are characterized by their Type I error tolerance, $\alpha$, and the maximum number of observations $M$ they can afford.  We assume that such a user stops and rejects $H_0$ at either the first time the always valid p-value crosses $\alpha$, or at time $M$, whichever comes first.  We refer to such a user as a {\em type $(M, \alpha)$ user.}  We study efficiency of an always valid p-value process via the {\em power profile} and {\em relative run-length profile} that the process induces for an $(M, \alpha)$ user across possible treatment effects; the former gives the probability that true effects are detected by this user, and the latter gives the expected run-length for such a user, normalized by $M$.  Informally, our goal is to deliver high power at low relative run-lengths.

To achieve this goal, we consider always valid p-value processes derived from a particular family of sequential tests, the {\em mixture sequential probability ratio tests} (mSPRT).  The mSPRT stops the first time a mixture (over treatment effects) of likelihood ratios against the null crosses a threshold.  These were first introduced by \cite{robbins1970statistical}. The mSPRT provides an easily implemented family of always valid p-value processes, as we discuss further in Section~\ref{sec:abtests}.  In this section, we focus on the efficiency properties of the mSPRT, by comparing it to feasible decision rules induced by other sequential tests.

We begin our study in Section \ref{sec:firstordereff} by considering {\em first-order efficiency} of the mSPRT, in the limit where $\alpha \to 0$.  We first note that for users where $M$ is relatively large or relatively small compared to $\log(1/\alpha)$, efficiency (in an appropriate sense) is a relatively weak requirement, and easily established in particular for the mSPRT.  Thus the more interesting analysis is for users where $M \sim \log(1/\alpha)$.  For such users, we establish that the mSPRT satisfies a desirable first-order efficiency property: there is no other feasible decision rule that yields a relative run-length that is lower on some non-null effects, while meeting the size constraint $\alpha$ and yielding higher power at all non-null effects.

First-order efficiency is important, but does not tell the complete story.  In particular, the preceding efficiency result does not depend on the mixing distribution employed by the mSPRT, whereas we should certainly expect that finite $M$ (or fixed $\alpha$) performance will be influenced by the choice of the mixing distribution.  In Section \ref{sec:Hdependence} we theoretically investigate the importance of the choice of prior.  We first theoretically investigate this question by considering a Bayesian setting where effects are drawn from a prior $G$.  We characterize the expected run-length minimizing mixture for type $(M,\alpha)$ users where $M \sim \log(1/\alpha)$.  Specialized to the setting of normal data, we note the intuitive result that this optimal mixture involves ``matching'' the prior in an appropriate sense.  In Section \ref{sec:Hdependence_emp}, we carry out an empirical investigation of the role of the choice of mixing distribution for $(M, \alpha)$ users; while we find that the choice of mixture matters, we also find that the performance of the mSPRT is surprisingly robust to misspecification.

We conclude our analysis by comparing the mSPRT to other decision rules.  First, in Section \ref{sec:fixedhorizon} we compare the mSPRT to fixed-horizon testing.  In particular, we show that for $(M, \alpha)$ users, the mSPRT provides an improvement over fixed-horizon testing in the $\alpha \to 0$ limit, even when the fixed-horizon test is tailored to prior knowledge.  In Section \ref{sec:fixedhorizon_emp}, we complement this theoretical result with empirical comparison of the mSPRT to fixed-horizon testing, demonstrating the practical benefits for $(M, \alpha)$ users.  Second, we note the mSPRT is only one of many possible choices of sequential tests in the literature that satisfy desirable optimality criteria.  We conclude our study of efficiency of the mSPRT in Section \ref{sec:comparison_emp}, where we empirically compare its performance to other always valid p-value processes derived from sequential tests (including the LIL approach discussed in Section \ref{sec:related}).  These empirical results demonstrate that the mSPRT delivers high performance in the regime of $M$ and $\alpha$ relevant for practical application.

The section is organized as follows.  First, we present our theoretical results on first-order efficiency (Section \ref{sec:firstordereff}), choice of mixing distribution (Section \ref{sec:Hdependence}), and comparison to fixed-horizon testing (Section \ref{sec:fixedhorizon}).  Next, in Section \ref{sec:empirical}, we present our empirical results in a simulation set up involving a single stream of normal data, and present an empirical investigation of the dependence on the prior (Section \ref{sec:Hdependence_emp}), an empirical comparison of the mSPRT to fixed-horizon testing (Section \ref{sec:fixedhorizon_emp}), and a comparison to other sequential testing approaches (Section \ref{sec:comparison_emp}).}



\subsection{The user model}

Of course, any specific user's preferences will be highly nuanced.  In our analysis, for technical simplicity we consider the following user model: we assume user preferences can be characterized by a parameter $M$ representing the maximum number of observations that the user is willing to wait, together with their Type I error tolerance $\alpha$.  Given an always valid p-value process, we consider a simple model of user behavior: users stop at either the first time the p-value drops below $\alpha$ (in which case they reject the null), or at time $M$ (in which case they do not reject the null unless the p-value at time $M$ is also below $\alpha$), whichever occurs first.

We refer to such a user as a {\em $(M,\alpha)$ user}.  In the remainder of the section, our goal will be to make near-optimal tradeoffs between power and run-length for users in the limit where $\alpha \to 0$, without prior knowledge of their preferences, $(M,\alpha)$.  \lp{ Given the equivalence to a sequential test  $(T(\alpha), \delta(\alpha))$, we define the $(M,\alpha)$ user's decision rule by $T(M, \alpha) \triangleq \min\{ T(\alpha), M \}$, and remark that $\delta(M, \alpha) = 1$ if and only if $T(\alpha) \leq M$. }


\subsection{The mSPRT}

The always valid p-values we employ are derived from a particular family of sequential tests: the {\em mixture sequential probability ratio test} (mSPRT) \citep{robbins1970statistical}.  We begin by imposing slight restrictions on the data model: we assume that the data is real valued and drawn from a single parameter exponential family, $f_\theta(x) = F'_\theta(x) = f_0(x) \exp(\theta x - \psi(\theta))$, where tests are of the natural parameter $\theta$, $\Theta$ is an open interval, $\psi^{''}(\theta) > 0$ for all $\theta$, and $\E|X_1|^4 < \infty$.  \rj{The function $\psi(\theta)$ is referred to as the {\em log partition function} for the family.}

The mSPRT is parameterized by a {\em mixing distribution} $H$ over $\Theta$, which we restrict to have everywhere continuous and positive derivative. Given an observed sample average $s_n$ up to time $n$, the likelihood ratio of $\theta$ against $\theta_0$ is $(f_\theta(s_n)/f_{\theta_0}(s_n))^n$.  Thus we define the {\em mixture likelihood ratio} with respect to $H$ as
\begin{equation} \label{eq:msprt_definition} \Lambda_n^H(s_n) = \int_{\Theta} \left( \frac{f_\theta(s_n)}{f_{\theta_0}(s_n)}\right)^n dH(\theta).
\end{equation}
The mSPRT is then defined by:
\begin{align}
T^H(\alpha) & = \inf\{n : \Lambda_n^H(S_n) \geq \alpha^{-1} \}; \ \ \text{and} \label{eq:mSPRT1}\\
\delta^H(\alpha) &= \1 \left\{T^H(\alpha) < \infty\right\}, \label{eq:mSPRT2}
\end{align}
 \rj{where $S_n = \sum_{i=1}^n X_i/n$}.  The choice of threshold $\alpha^{-1}$ on the likelihood ratio ensures Type I error is controlled at level $\alpha$, via standard martingale techniques \citep{siegmund1985sequential}. Intuitively, $\Lambda_n^H(S_n)$ represents the evidence against $H_0$ in favor of a mixture of alternative hypotheses, based on the first $n$ observations. The test rejects $H_0$ if the accumulated evidence ever becomes large enough.
%

Our first motivation for considering mSPRTs is that they are {\em tests of power one} \citep{RS74}: for all $\alpha$ and $\theta \neq \theta_0$, there holds:,
\[  \P_\theta( T(\alpha) < \infty, \delta(\alpha) = 1) = 1. \]
In other words, for the hypothetical user who can wait forever, any mSPRT delivers power one for any alternative. \djw{Second, mSPRTs have been studied from a decision-theoretic framework, where the cost of longer experiments is taken to be the terminal sample size multiplied by some cost $c$ per observation. The goal there is to balance this penalty against the costs associated with Type I and Type II errors. mSPRTs are found to be asymptotically optimal as $c \to 0$ \citep{lai2001sequential}. This is a large data limit, analogous to the $\alpha \to 0, \, M \to \infty$ setup of this paper.}


For later reference, we note the following result due to \cite{PS1975}: for any mixing distribution $H$, as $\alpha \to 0$,
\begin{equation}
\label{eq:limit_prob}
T^H(\alpha) / \log(1/\alpha) \to I(\theta, \theta_0)^{-1} := \left \{ (\theta - \theta_0) \psi'(\theta) - (\psi(\theta) - \psi(\theta_0)) \right \}^{-1}
\end{equation}
holds in probability and in $\mathcal{L}^2$, where $\psi(\theta)$ is the log-partition function for the family $F_\theta$.  This result characterizes the run-length of the mSPRT in the small $\alpha$ limit, and plays a key role in our subsequent study of efficiency.

\subsection{\rj{First-order efficiency in the $\alpha \to 0$ limit}}
\label{sec:firstordereff}

We now formalize our study of the power and run-length tradeoff for an $(M, \alpha)$ user.  \lp{In this section, the set of $(T(M, \alpha), \delta(M, \alpha))$ decision rules is referred to as the set of {\em feasible} decision rules for an $(M, \alpha)$ user.}


We first map the two objectives of the user to formal quantities of interest.  First, an $(M, \alpha)$ user will want to choose her decision rule to maximize the {\em power profile} $\nu(\theta) = \P_\theta(\delta = 1)$ over $\theta \neq \theta_0$. Second, she will want to minimize the {\em relative run-length profile}, i.e., the run-length measured against the maximum available to her, $\rho(\theta) = \E_\theta(T) / M$, viewed as a function of $\theta$.

{\em Perfect efficiency} would entail $\rho(\theta) = 0$ and $\nu(\theta) = 1$ for all $\theta \neq \theta_0$.  Of course, perfect efficiency is generally unattainable for feasible decision rules.  In this section we study the best achievable performance a user can hope for, in the limit where $\alpha \to 0$.

Our analysis depends on the characterization of run-length of the mSPRT in \eqref{eq:limit_prob}.  The consequence is that if we produce always valid p-values using the mSPRT, then in the limit as $\alpha \to 0$ the study of efficiency divides into three cases depending on the relative values of $M$ and $\log(1/\alpha)$.  We consider these cases in turn.

{\bf ``Aggressive'' users: $M \gg \log(1 / \alpha)$.} Users in this regime are aggressive; $\alpha$ is large relative to the maximum run-length they have chosen.  In this regime, any mSPRT asymptotically recovers perfect efficiency in the limit where $\alpha$ \djw{is} small.  Intuitively, because the user is willing to wait a substantially longer time than $\log(1/\alpha)$, a sublinear fraction of their maximum run-length is used by the mSPRT; since the mSPRT is a test of power 1, this means the user receives power near 100\% in return for a near-zero run-length profile.  The proof of the following result follows immediately from (\ref{eq:limit_prob}).
\begin{proposition}
	Given any mixture $H$, let $\rho(\theta)$ and $\nu(\theta)$ be the relative run-length and power profiles, respectively, associated with $(T^H(M, \alpha), \delta^H(M, \alpha))$. If $\alpha \to 0$ and $M \to \infty$ such that $M/\log(1/\alpha) \to \infty$, we have $\rho(\theta) \to 0$ and $\nu(\theta) \to 1$ at each $\theta \neq \theta_0$.
\end{proposition}

{\bf ``Conservative'' users: $M \ll \log(1 / \alpha)$.}  Users in this regime are conservative; $\alpha$ is small relative to the maximum run-length they have chosen.  In this case, experimentation is not productive: the user is unwilling to wait long enough to detect any effects.  Thus any mSPRT trivially performs as well as any feasible decision rule for the user.
\begin{proposition}
\label{prop:conservative}
	For each $(M, \alpha)$, and any feasible decision rule, let $\nu$ be the associated power profile. Then if $\alpha \to 0, M \to \infty$ such that $M/\log(1/\alpha) \to 0$, we have $\nu(\theta) \to 0$ for each $\theta$.
\end{proposition}

{\bf ``Goldilocks'' users: $M \sim \log(1 /\alpha)$.}  This is the interesting case, where experimentation is worthwhile but statistical analysis is non-trivial.  To proceed we require an additional definition.  For a family of sequential tests, we want to define a measure of the worst-case efficiency over $\theta \neq \theta_0$ for an $(M, \alpha)$ user.  Informally, we define this as the relative efficiency of the truncated test they obtain by minimizing the relative run-length everywhere, compared with any other test that offers at least as good power everywhere.  This is formalized as follows.

\begin{definition}
	Given a sequential test $(T(\alpha), \delta(\alpha))$, let $\rho(\theta; \alpha, M)$ and $\nu(\theta; \alpha, M)$ be the relative run-length and power profiles associated with $(T(M, \alpha), \delta(M, \alpha))$. The {\em relative efficiency} of this test at $(M, \alpha)$ is
\[ \phi(M, \alpha) = \inf_{(T, \delta)} \inf_{\theta \neq \theta_0} \frac{\rho(\theta)}{\rho(\theta; \alpha,M)} \]
where the infimum is taken over all weakly more powerful, feasible decision rules: i.e., over tests such that $T \leq M$ a.s.~, $\P_{\theta_0}(\delta = 1) \leq \alpha$), and for all $\theta \neq \theta_0$ there holds $\nu(\theta) \geq \nu(\theta; \alpha, M)$.
\end{definition}

Our main result demonstrates that in the regime where $M \sim \log(1/\alpha)$, any mSPRT has relative efficiency approaching unity when $\alpha \to 0$.

\begin{theorem}
\label{thm:efficiency}
	Suppose $\psi''$ is absolutely continuous and there is an open interval around $\theta_0$ where $\psi''(\theta) < \infty$. Given any $H$, let $\phi(M, \alpha)$ be the efficiency of the mSPRT $(T^H(\alpha), \delta^H(\alpha))$. If $\alpha \to 0, M \to \infty$ such that $M = O(\log(\alpha^{-1}))$, we have $\phi(M, \alpha) \to 1$.
\end{theorem}

\rj{Note that this result is not dependent on the mixing distribution $H$.  This is a consequence of the fact that we study efficiency only to first-order in the limit as $\alpha \to 0$.  However, as we find in the next section, the choice of prior can have an important second order effect on performance.}

\subsection{\rj{The role of the mixing distribution $H$}}
\label{sec:Hdependence}

\rj{In this section we investigate the impact of the choice of the mixing distribution $H$ on performance.
As noted in the preceding section, it is users with $M \sim \log(1/\alpha)$ where there is a meaningful tradeoff between run-length and power; therefore we focus our attention on the role of $H$ for this class of users.  Theorem \ref{thm:efficiency} establishes that for these users any mSPRT is asymptotically efficient as $\alpha \to 0$, i.e., that no other test can offer a uniform improvement at every $\theta$ to first order in that limit.  However, the choice of $H$ does influence the tradeoff between performance at different effect sizes, and this tradeoff will yield a relevant second order effect on performance.

In this subsection, we consider a Bayesian setting where we have a prior $\theta \sim G$ under $H_1$.  The following theorem gives the mixing distribution $H$ that minimizes the relative run-length on average over this prior for a ``Goldilocks'' user with parameters $(M, \alpha)$ (i.e., a user with $M \sim \log(1/\alpha)$).}


\begin{theorem} \label{thm:optimal_mixture}
Suppose $G$ is absolutely continuous with respect to Lebesgue measure, and let $H_\gamma$ be a parametric family, $\gamma \in \Gamma$, with density $h_\gamma$ positive and continuous on $\Theta$. Let $\rho_\gamma(\theta)$ be the profile of relative run-lengths associated with $(T^{H_\gamma}(M, \alpha), \delta^{H_\gamma}(M, \alpha))$. Then up to $o(1)$ terms as $\alpha \rightarrow 0$ and with $M = O(\log(1/\alpha))$, the average relative run-length $\E_{\theta \sim G} \{\rho_\gamma(\theta)\}$ is minimized by any $\gamma^*$ such that:
\begin{equation} \label{eq:prior_matching} \gamma^* \in \argmin_{\gamma \in \Gamma} - \E_{\theta \sim G} \v{1}_{A(M, \alpha)} I(\theta, \theta_0)^{-1} \log h_\gamma(\theta), \end{equation}

where $A(M, \alpha) = \left \{ \theta : I(\theta, \theta_0) \geq\log (1/\alpha) / M \right \}$.

If $h_\gamma(\theta) = q(\gamma_1,\gamma_2) e^{\gamma_1 \theta - \gamma_2 \psi(\theta)}$ \djw{is} a conjugate prior for $f_\theta$, the data distribution, then $\gamma^*$ solves:

\[ \frac{ \partial q(\gamma_1,\gamma_2) / \partial \gamma_1}{\partial q(\gamma_1, \gamma_2) / \partial \gamma_2} = \frac{ \E_{\theta \sim G} \v{1}_A \theta I(\theta, \theta_0)^{-1}}{\E_{\theta \sim G} \v{1}_A \psi(\theta) I(\theta, \theta_0)^{-1}}. \]

\end{theorem}

\rj{We remark here that, consistent with our finding of first-order optimality for any choice of $H$ in Theorem \ref{thm:efficiency}, it follows from our proof that the choice of $\gamma$ does not impact  $\E_{\theta \sim G} \{\rho_\gamma(\theta)\}$ to first order in the limiting regime of Theorem \ref{thm:optimal_mixture}.}

Heuristically, the mSPRT rejects $H_0$ when there is sufficient evidence in favor of any $H_1 : \theta \neq \theta_0$, weighted by the distribution $H$ over alternatives. Thus we might expect optimal sampling efficiency when $H$ is {\em matched} to the prior. If we specialize to normal data, $f_\theta(x) = \phi(x-\theta)$, a centered normal prior, $G(\theta) = \Phi(\frac{\theta}{\tau})$, and consider normal mixtures, $h_\gamma(\theta) = \frac{1}{\gamma} \phi(\frac{\theta}{\gamma})$, this intuition is mostly accurate.  In that case, the optimal choice of mixing variance becomes:
\begin{equation}  \label{eq:prior_matching_normal} \gamma^{2*} = \tau^2 \frac{ \Phi(-b) }{ \frac{1}{b} \phi(b) - \Phi(-b)}
\end{equation}
for $b = \left( \frac{2 \log \alpha^{-1} }{M \tau^2} \right)^{1/2}$, equal to the prior variance multiplied by a factor correcting for anticipated truncation. Sampling efficiency is improved by weighting towards larger effects when few samples are available and smaller effects where there is ample data.

\rj{We subsequently investigate the importance of the choice of $H$ empirically in Section \ref{sec:Hdependence_emp} below.}

\subsection{\rj{Comparison to fixed-horizon testing}}
\label{sec:fixedhorizon}

\rj{We now theoretically compare decisions based on mSPRT p-values with fixed-horizon testing.  In general, the fixed-horizon sample size must be chosen in reference to the effect sizes where detection is needed; therefore, we compare performance of the mSPRT to a fixed-horizon test that is calibrated to have good average power over the prior $G$.}

For convenience, we focus on the normal case $G = N(0,\tau^2)$, and fix $H = N(0,(\gamma^*)^2)$.  We consider two rival tests for an arbitrary ``Goldilocks'' user: the mSPRT truncated at $M$, and the fixed-horizon test chosen to have the same average power over this prior.  For $\alpha$ small, the former has lower average relative run-length over $G$, as formalized in the following proposition.
\begin{proposition} \label{prop:improvement}
\rj{Suppose $G = N(0, \tau^2)$ and $H = N(0, (\gamma^*)^2)$ (cf.~Theorem \ref{thm:optimal_mixture}), and let $\nu^*(\theta; \alpha, M)$ and $\rho^*(\theta; \alpha, M)$ be the power profile and relative run-length profile of the resulting mSPRT truncated at $M$.  Let $n^*$ be the sample size such that the UMP fixed-horizon test at sample size $n^*$ has expected power matching the truncated mSPRT, i.e., equal to $\E_{\theta \sim G}[ \nu^*(\theta; \alpha, M)]$.  Let $\rho_f(\theta; \alpha, M)$ be the relative run-length profile of this fixed horizon test.

If $\alpha \to 0, M \to \infty$, such that $M = O(\log(1/\alpha))$, then $\E_{\theta\sim G}[\rho^*(\theta; M,\alpha)]/\E_{\theta \sim G}[\rho_f(\theta; \alpha, M)] \to 0$.}
\end{proposition}

\rj{We subsequently empirically compare the mSPRT decision rule to fixed-horizon testing with single stream data in Section \ref{sec:fixedhorizon_emp} below.  We also make this comparison again in Section \ref{sec:realworld}, using experiment data from the large-scale commercial A/B testing platform where these methods were deployed.}

\lp{
\subsection{Empirical analysis}
\label{sec:empirical}

In this subsection we complement our preceding theoretical analysis with a number of empirical analyses.  For all our simulations, we assume the normal data, prior, and mixture setup leading to \eqref{eq:prior_matching_normal}: i.e., normal data with $f_\theta(x) = \phi(x-\theta)$, a centered normal prior with $G(\theta) = \Phi(\frac{\theta}{\tau})$, and normal mixing distribution with $h_\gamma(\theta) = \frac{1}{\gamma} \phi(\frac{\theta}{\gamma})$.  Informally, our simulations evaluate performance of different decision rules for $(M, \alpha)$ users, on average over effect sizes drawn from the prior.  Formally, for each tuple $(\alpha, M, \tau)$, we draw effect sizes $\theta_b \sim G$, $b = 1,\dots,B$, and we simulate a stream of data $\v{X}_b = (X_n)_{n=1}^{M} \stackrel{iid}{\sim} F_{\theta_b}$.  We estimate average power and run-length profiles by $\hat{\nu} = \frac{1}{B} \sum_{b=1}^B \mathbf{1}_{\{T_b \leq M\}}$ and $\hat{\rho} =  \frac{1}{BM} \sum_{b=1}^B \min\{ T_b, M\}$, where $T_b$ corresponds to the first rejection time of test $T(\alpha)$ for $H_0: \theta = 0$ with data $\v{X}_b$.  Finally, we compare average power $\hat{\nu}$ and average run-length $\hat{\rho}$ across a variety of tests.

We select $(\alpha, M, \tau)$ over the grid generated by outer product of $\alpha \in \{1e^{-4}, 1e^{-2}, 1e^{-1}\}$, $\tau^2 \in \{1e^{-4}, 1e^{-2}, 1e^{-1}\}$, $M \in \{ 1e^1, \dots, 1e^7\}$, and $B = 1e^4$. Rough estimates suggest these are enough Monte Carlo simulations for low variability. The effect size distributions were designed to approximate reality for online experimenters, as $\tau^2= 1e^{-4}$ roughly matches a 10\% relative improvement on 1\% conversion rate, $1e^{-2}$ a 100\% improvement, and $1e^{-1}$ a 1000\% improvement.

To achieve comparison across the wide range of parameters, we present results relative to the re-parameterized maximum run-length $\tilde{M}$, defined as follows:
\[ \tilde{M} = M \left( \frac{ \log \alpha^{-1} }{\tau^2} \right)^{-1}. \]

\subsubsection{The role of the mixing distribution $H$: Empirics}
\label{sec:Hdependence_emp}

Nominally, the dependence of $\gamma^*$ on $M$ presents a challenge. However, as we now demonstrate, the choice of mixing distribution is quite robust with respect to variation in $M$, though getting $\gamma$ ``in the ballpark'' is important.

We simulate the mSPRT for five different mixing regimes, $\gamma \in \{ \gamma^*, 1, 1e^{-1} \tau, \tau, 1e^1 \tau \}$. Figure \ref{fig:gamma_varying_sims} depicts results. Across all cases, we find that $\gamma$ misspecification of around one order of magnitude leads to a less than 5\% drop in average power, and no more than a 10\% increase in average run-length. Missing $\gamma^*$ by two orders of magnitude, however, can result in a 20\% drop in average power and a 40\% increase in average run-length. Note that we do not show results for severely under-powered parameter combinations ($\hat{\nu} < 0.1$), as the variance in estimates distracts from the overall picture.

The story does change for ``non-Goldilocks'' users. As $\tilde{M}$ grows, $\hat{\nu} \rightarrow 1$ regardless of $\gamma$, resulting in muted gains from $H$ optimization, while $\hat{\rho}$ shows little sensitivity to $\tilde{M}$. Lastly, we remark on the few cases where $\gamma \neq \gamma^*$ achieves superior results. These all have $\alpha = 0.1$ and $\tilde{M} < 1.0$, indicating a horizon for the breakdown of asymptotics used in Theorem \ref{thm:optimal_mixture} for finite sample regimes.

\begin{figure*}
\begin{center}
\includegraphics[width=0.9\textwidth]{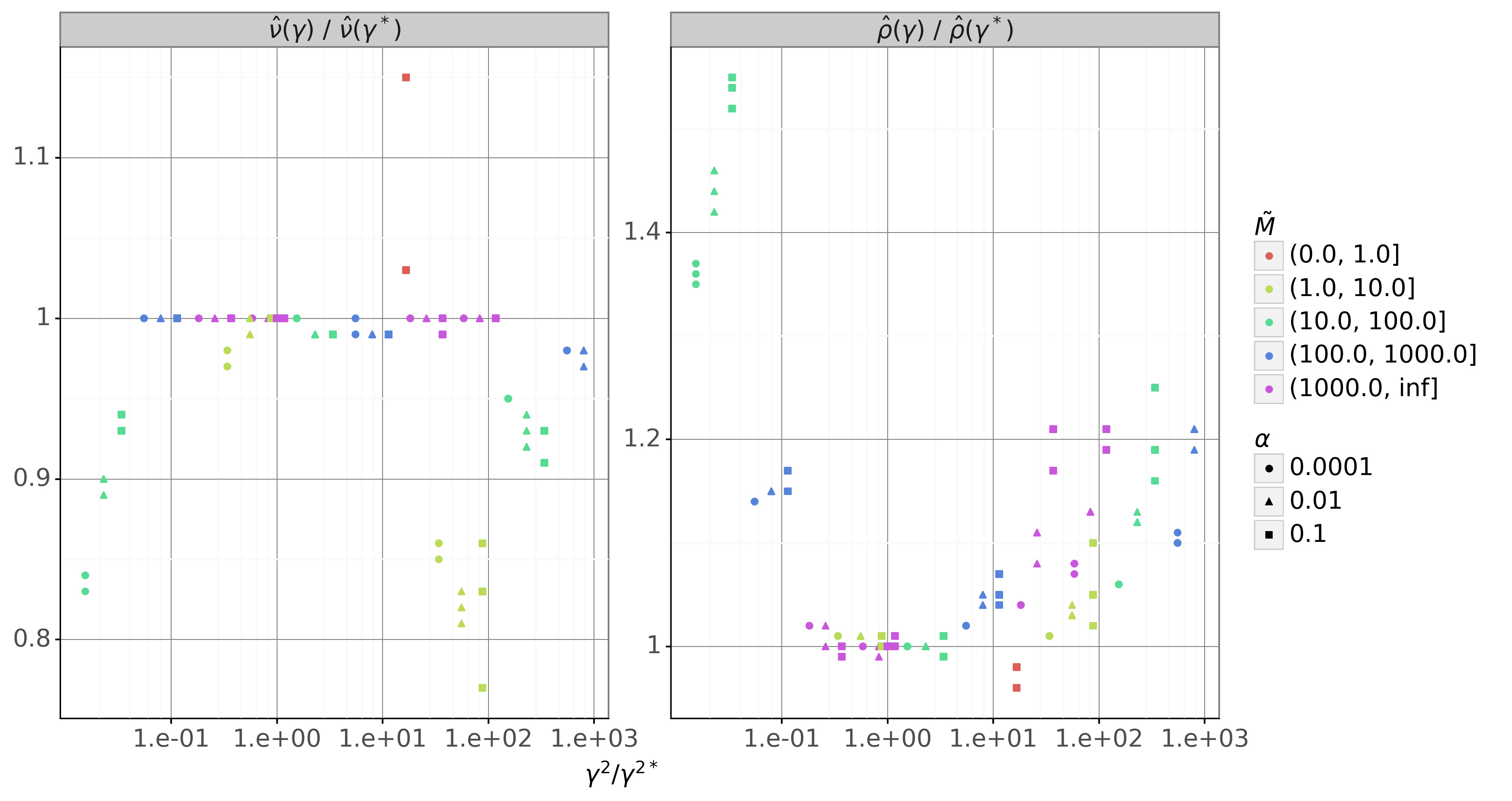}
\end{center}
\caption{\lp{Results of empirical investigation into the role of mixing distribution $H$ for the mSPRT. Four alternative mixing regimes are compared, $\gamma \in \{ 1, 1e^{-1} \tau, \tau, 1e^1 \tau \}$, to $\gamma^*$ (cf.~Theorem \ref{thm:optimal_mixture}), showing robustness in relative power ($\nu(\gamma) / \nu(\gamma^*)$) and run-length ($\rho(\gamma) / \rho(\gamma^*)$) to mixing misspecification of 1 order of magnitude across a variety of scenarios. The few cases where $\gamma \neq \gamma^*$ achieves superior results indicate a breakdown of asymptotics leading to Theorem \ref{thm:optimal_mixture}. Note: we do not show results for severely under-powered parameter combinations ($\hat{\nu} < 0.1$), as the variance in estimates distracts from the overall picture.}}
\label{fig:gamma_varying_sims}
\end{figure*}


\subsubsection{Empirical comparison to fixed-horizon testing}
\label{sec:fixedhorizon_emp}

Recall that Proposition \ref{prop:improvement} gives the improvement in expected run-length of the mSPRT decision rule over an optimized fixed-horizon test, asymptotically as $\alpha \to 0$.  We now evaluate this asymptotic result in a finite sample setting. Figure \ref{fig:msprt_to_fixed_runtime} shows the benefit from stopping early outweighs the cost of additional slack in always valid decision boundaries at reasonable power levels ($\hat{\rho} \geq 0.5$), regardless of parameter values.

We also make a similar comparison in Section \ref{sec:realworld}, using data from over 10,000 experiments with two streams (treatment and control) from the large-scale commercial A/B testing platform where these methods were deployed.  (See Figure \ref{fig:PropRunLength} and the discussion in Section \ref{sec:realworld} for further details.)

\begin{figure*}
\begin{center}
\includegraphics[width=0.5\textwidth]{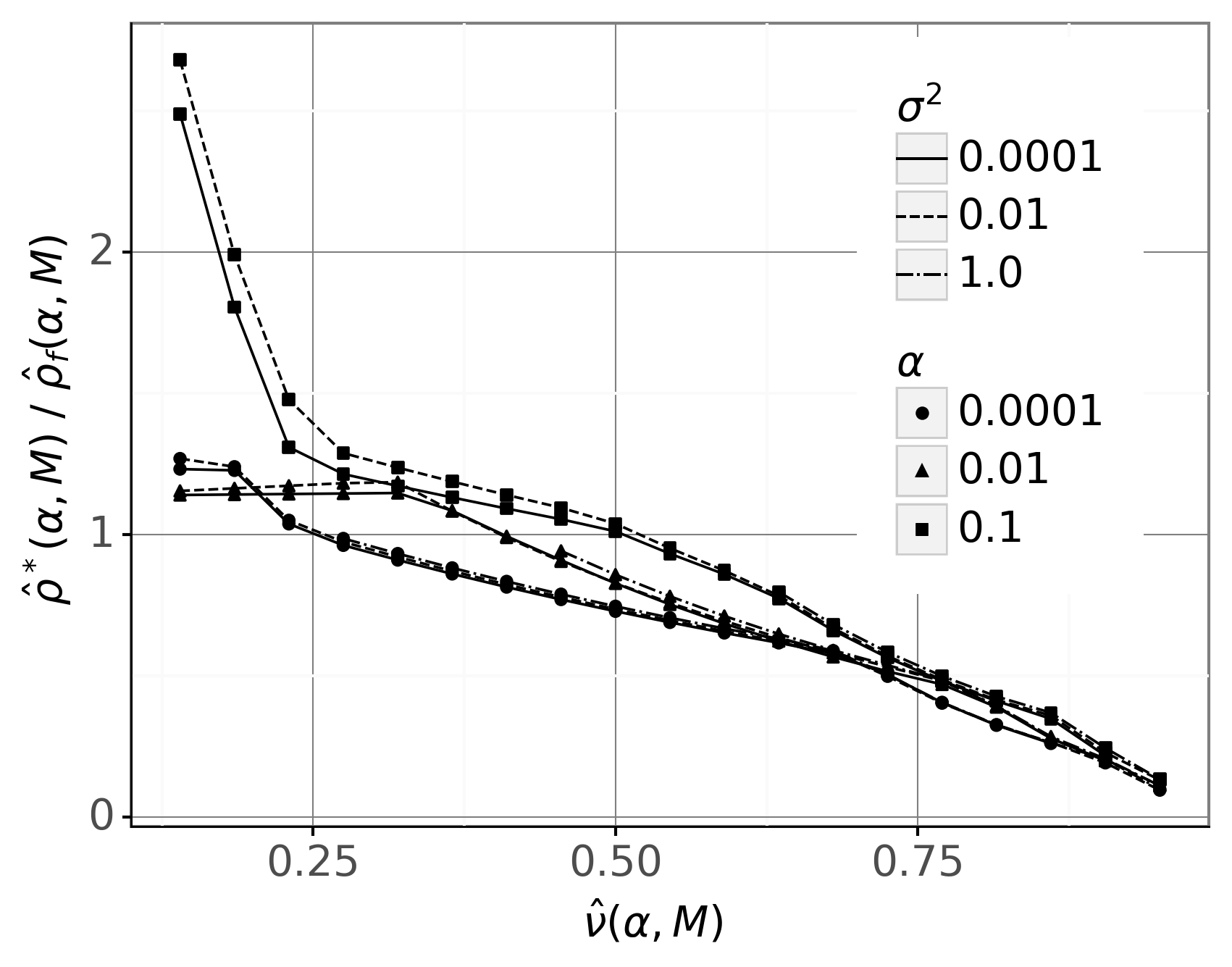}
\end{center}
\caption{\lp{Average run-time for optimally tuned mSPRT ($\rho^*$) and fixed-horizon UMP test ($\rho_f$), chosen to have same average power over distribution of effect sizes. The asymptotic result of Proposition \ref{prop:improvement} is shown to hold in many finite sample settings.}}
\label{fig:msprt_to_fixed_runtime}
\end{figure*}

\subsubsection{Empirical comparison to other sequential testing approaches}
\label{sec:comparison_emp}

As noted above, asymptotic first-order efficiency is almost certainly not unique to the mSPRT, even across $(M, \alpha)$ users. The discussion in Section \ref{sec:related_lil} of our paper, and in particular Section 4.1.1 of \cite{kaufmann2014complexity}, highlights tests of the form:

\[
T^{\beta}(\alpha) = \inf \left \{n \ : \ S_n > \left( \frac{2 \beta(n, \alpha)}{n} \right)^{1/2}  \right \}
\]

as a reasonable alternative class of candidates.  In this section we compare the mSPRT to two tests of the preceding form, from \cite{robbins1970statistical} and \cite{kaufmann2014complexity} respectively.  Formally, within the simulation framework specified above, we compare decision rules for $(M,\alpha)$ users derived from the following sequential tests:
\begin{enumerate}
\item the $H$-optimal mSPRT derived in this paper (denoted $\mathtt{mSPRT\_opt}$ in the plots);
\item the test proposed in Section 3 of \cite{robbins1970statistical}, characterized by $\beta(n, \alpha) = \frac{n+1}{n} \log( \frac{n+1}{2 \alpha})$ (denoted $\mathtt{r70}$ in the plots); and
\item a ``LIL based'' test with $\beta(n, \alpha) = \log(\alpha^{-1}) + 3 \log \log(\alpha^{-1}) + \frac{3}{2} \log \log(e * n)$ from \cite{kaufmann2014complexity} (denoted $\mathtt{k14}$ in the plots).
\end{enumerate}

In Figure \ref{fig:compare_pwr1_tests} we show that across all finite sample regimes examined, $\mathtt{mSPRT\_opt}$ has optimal average power and run-length while $\mathtt{r70}$ and $\mathtt{k14}$ vie for dominance. These results highlight a general phenomenon. While multiple decision rules may make the ``Goldilocks'' user perfectly efficient in the limit, they differ in preferential treatment for some $(M, \alpha)$ users over others. Concretely, the improved {\em rate} of efficiency gain for $\mathtt{k14}$ comes at the cost of lower efficiency for low to moderate $\tilde{M}$.

The mSPRT is given an advantage as it is optimally tuned to the user's $M$ via $\gamma^*$, but we stress that while some tuning is advised, finite sample efficiency is practically robust to $H$ misspecification. Even for users who truncate at 100x the typical run-length ($\tilde{M} = 100$), $\mathtt{mSPRT\_opt}$ has roughly 5\% improvement on average power over its peers, which is within the two order of magnitude misspecification range identified in Section \ref{sec:Hdependence_emp}. Average run-length remains 20\% better for all $\tilde{M} \geq 1e^{1}$, and up 40\% better in some cases ($\tau = 1e^{-2}, \alpha = 0.1$).

\begin{figure*}
\begin{center}
\includegraphics[width=0.9\textwidth]{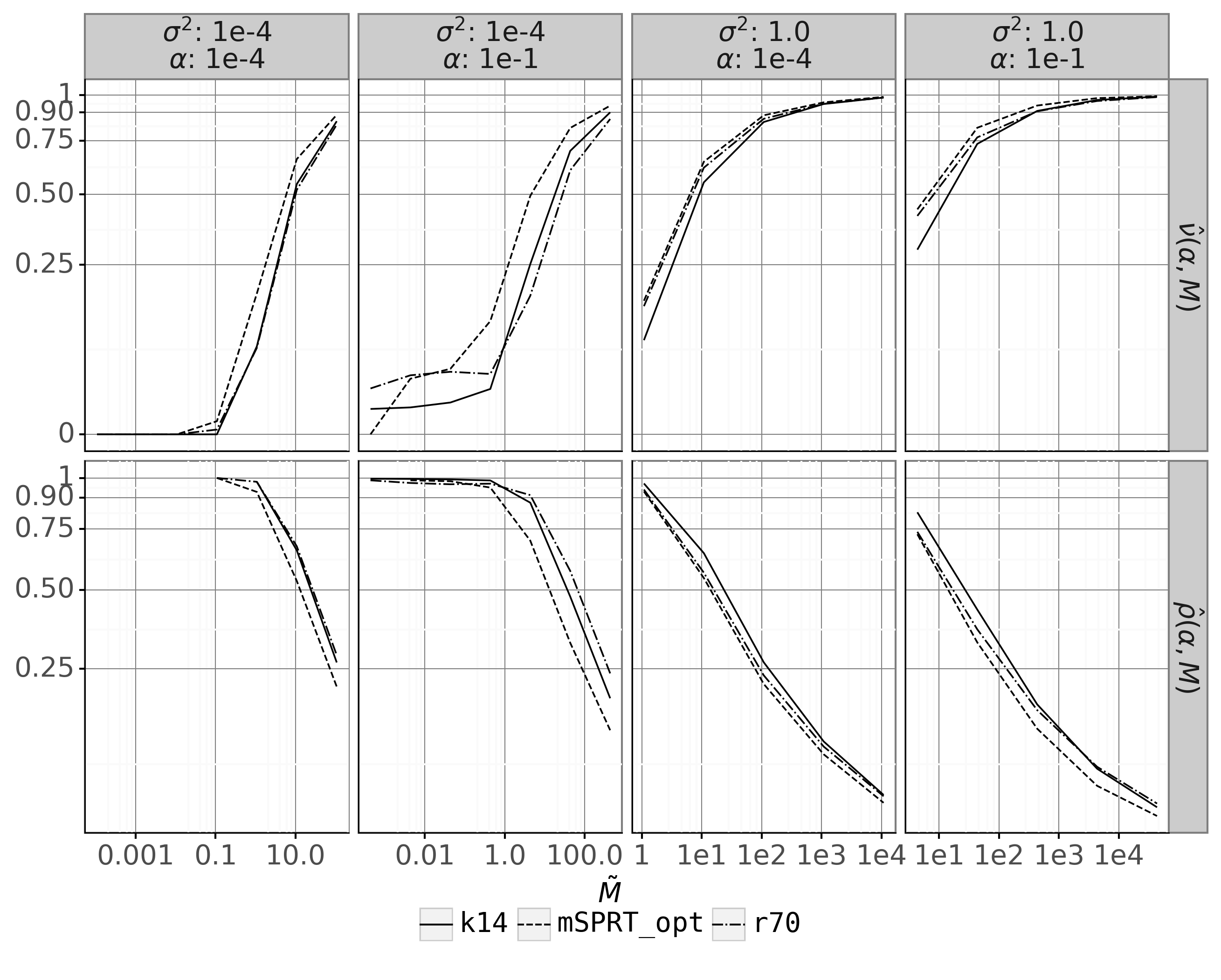}
\end{center}
\caption{\lp{Average power ($\nu$) and relative run-length ($\rho$) over a distribution of effect sizes, compared for 3 always valid decision boundaries: the $H$-optimal mSPRT ($\mathtt{mSPRT\_opt}$), a proposal in \cite{robbins1970statistical} ($\mathtt{r70}$), and a ``LIL based'' test from \cite{kaufmann2014complexity} ($\mathtt{k14}$). The optimally tuned mSPRT is strictly more efficient at all but the smallest levels of truncation, and continues to be substantially so even at very large truncation, despite the improved rate of efficiency gain for $\mathtt{k14}$.}}
\label{fig:compare_pwr1_tests}
\end{figure*}
}

\section{Deployment}
\label{sec:abtests}

One technical challenge remains before the always valid p-values and confidence intervals derived in the previous section may \djw{be} used in A/B testing. While those measures address inference for a single parameter in a single IID sequence of data, incoming visitors to an A/B test are actually randomized into two streams where each stream receives a distinct treatment.  In this section, we summarize how the measures can be modified to address the two most typical goals in practice: inference on the difference in means between two streams of normally distributed data, and inference on the difference in success probabilities for binary-valued data (both of these are commonly referred to as the difference in ``conversion rates").  The discussion is limited to p-values, but the results may be extended to confidence intervals in the usual way. Since January 2015, these two-stream p-values and confidence intervals have been implemented in a large scale platform serving thousands of clients, ranging from small businesses to large enterprises.

\djw{In the case of normal data, we develop a two-stream mSPRT which gives exact uniform Type I error control for testing the composite null hypothesis that the two means are equal. Extending the asymptotic theory of the previous section, we find that first-order efficiency for trading off power and run-time in the $\alpha \to 0$ limit is still obtained. As in the single stream case, our two-stream mSPRT is parameterized by a mixing parameter $H$ over the unknown treatment effect. Theorem \ref{thm:optimal_mixture} carries over to this setting, indicating how this mixture should be tailored to the distribution of anticipated effects, in order to obtain good performance at moderate $\alpha$. Much of the technical details are deferred to Appendix \ref{sec:supp_abtests}.

 For binary data, our two-stream mSPRT achieves approximate uniform Type I error control by appealing to the Central Limit Theorem. In this case, we use empirical data from our deployment to detail the efficiency gain over fixed-horizon testing. As in Sections \ref{sec:fixedhorizon} and \ref{sec:fixedhorizon_emp}, we see that the mSPRT, which is optimized to a prior for the treatment effect, can trade off power and run-time better than a comparably optimized fixed-horizon test. Of particular practical importance, the mSPRT is seen to outperform any fixed-horizon test that the experimenter might select herself, unless she has far better prior information than the platform does.}


\subsection{Two-stream p-values}

We represent observations in the two streams by IID sequences $\v{X} = (X_n)_{n =1}^\infty$ and $\v{Y} = (Y_n)_{n =1}^\infty$.  For normal data, we have $X_n \sim N(\mu_0, \sigma^2), Y_n \sim N(\mu_1, \sigma^2)$ where $\mu_0$ and $\mu_1$ are unknown, while the variance $\sigma^2$ is assumed known and common to both streams. For binary data, $X_n \sim Bernoulli(p_0), Y_n \sim Bernoulli(p_1)$, where $p_0, p_1 \in (0,1)$ are unknown. The conversion rate difference is $\theta = \mu_1 - \mu_0$ or $\theta = p_1 - p_0$ respectively.  In either case, we want to test the composite null hypothesis $H_0 : \theta = 0$ against $H_1: \theta \neq 0$.

We make the simplification that visitors arrive in pairs with one visitor assigned to each treatment, so that observations are obtained as a sequence of pairs $(W_n)_{n=1}^\infty = (X_1,Y_1),(X_2,Y_2),...$.  An always valid p-value is understood as a process adapted to the filtration generated by this sequence, which controls Type I error uniformly over both the composite null hypothesis and the choice of stopping time.  This model closely approximates the treatment allocation typically adopted in practice, where visitors arrive individually and each visitor is allocated to each treatment with 50\% probability independently of all other visitors.  A similar approach can be used even if the allocation to each group is not even, as long as it is fixed in advance.  Extensions to other allocation policies, such as data-dependent bandit schemes, will be the subject of future work.

For normal data, we view $(W_n)_{n=1}^\infty$ as a single stream of IID data from a bivariate distribution parameterized by the pair $(\theta, \mu)$, where $\mu = (\mu_0 + \mu_1)/2$.  It is straight forward to show that, after fixing $\mu = \mu^*$ arbitrarily, this distribution corresponds to the one-parameter exponential family $f_\theta(w) \propto \phi \left ( \frac{y - x - \theta}{\sigma \sqrt{2}} \right)$, where $w = (x,y)$.  Hence we may implement the mSPRT based on $f_\theta$, i.e. we threshold the mixture likelihood ratio given in (\ref{eq:msprt_definition}) with $s_n = \frac{1}{n} \sum_{i = 1}^n w_i$.  For any $\mu^*$, this mSPRT controls Type I error for testing the simple null $H_0 : \theta = 0, \mu = \mu^*$ against $H_1: \theta \neq 0, \mu = \mu^*$, and so the p-value derived from this mSPRT is always valid for testing the composite null hypothesis.  In Appendix \ref{sec:supp_abtests}, we show that it satisfies natural analogues of the single-stream optimality results described in Section \ref{sec:optimality}.

Unfortunately for binary data, the distribution of $W_n$ does not reduce to a one-parameter exponential family.  Nonetheless we set $p = (p_0 + p_1)/2$ and denote the density of $W_n$ by $f_{\theta,p}$.  Then, for any $\theta$ and $p^*$, in the limit as $n \to \infty$, the likelihood ratio against the pair $(\theta_0,p^*)$ in favor of $(\theta,p^*)$ approaches $(\tilde{f}_\theta(s_n)/\tilde{f}_{\theta_0}(s_n))^n$, where (with $w = (x,y)$):
$$ \tilde{f}_\theta(w) = \phi \left ( \frac{ y - x - \theta}{ \sqrt{p_0^*(1 - p_0^*) + p_1^*(1 - p_1^*)} } \right), \,\,\,\, p_0^* = p^* - \theta/2, \,\,\,\, p_1^* = p^* + \theta/2. $$
We compute the mSPRT p-values based on this density using the sample means in each stream as plug-in estimates for $p_0^*$ and $p_1^*$.  If $\alpha$ is moderate, the mSPRT terminates with high probability before this asymptotic distribution becomes accurate, so Type I error is not controlled.  However, for $\alpha$ small, simulation shows that these p-values are approximately always valid.

\subsection{Real-world improvement}
\label{sec:realworld}

\begin{figure*}
\begin{center}
\includegraphics[width=0.9\textwidth]{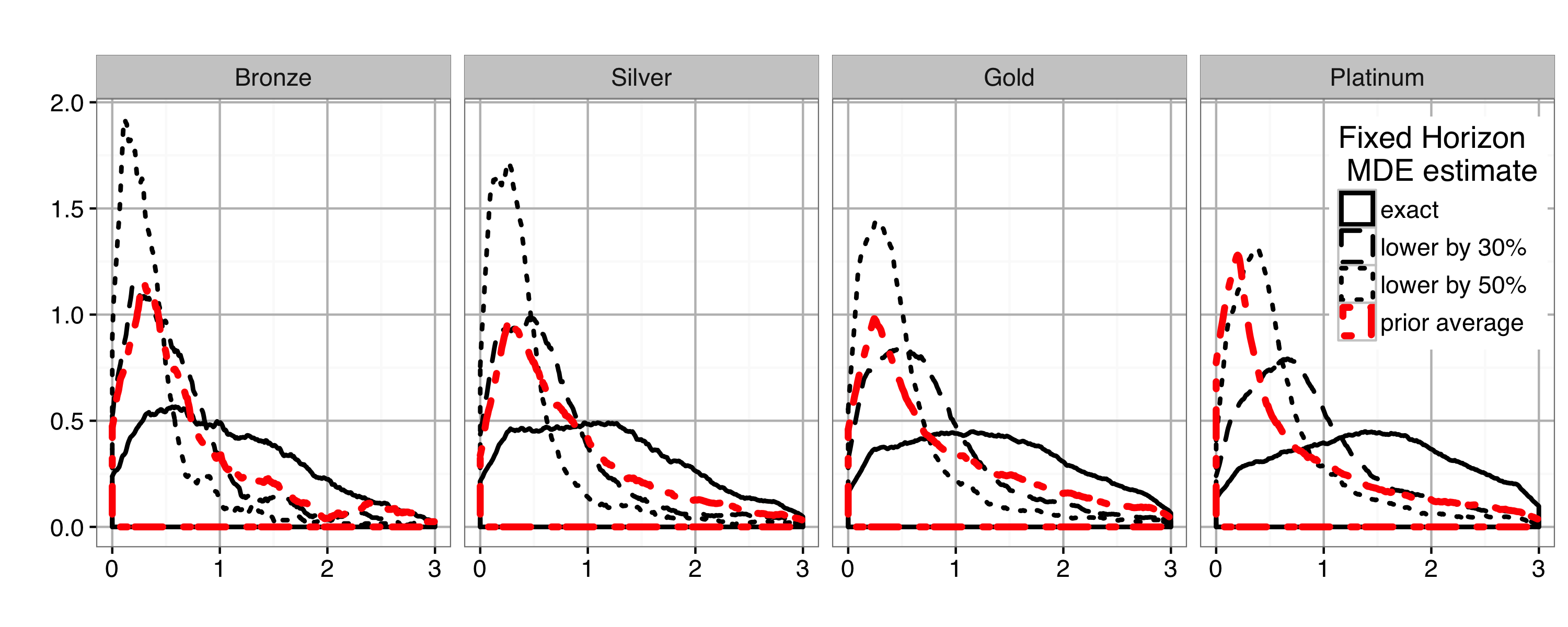}
\end{center}
\caption{The empirical distribution of sample size ratios between the mSPRT and \djw{suitably optimized fixed-horizon tests} over 10,000 randomly selected experiments, divided up by the subscription tier of the customer on the platform.  \rj{See main text for details.}}
\label{fig:PropRunLength}
\end{figure*}

\djw{Now we use empirical data to document the improvement of our two-stream mSPRT over fixed-horizon testing for binary data. For this purpose,} 10,000 experiments were randomly sampled from those binary experiments run on a large-scale commercial A/B testing platform in early 2015.

Customers of the platform in 2015 could purchase subscriptions at one of four tiers: Bronze, Silver, Gold or Platinum. \djw{Customers in the higher tiers tended to be larger, better optimized organizations, who were targeting smaller effect sizes. By separating out the 10 000 experiments according to the subscription tier of the experimenters, we can investigate how the two-stream mSPRT performs under different true effect distributions. This mirrors how we varied the prior $G$ for the numerical simulations presented in Section \ref{sec:optimality}. Specifically, for each tier, we found that the observed data was consistent with a normal distribution of true effect sizes $\theta/\sqrt{p_0^*(1 - p_0^*) + p_1^*(1 - p_1^*)}$ across experiments, so we fit a centered normal prior $G$ with unknown variance for the effect under the alternative hypothesis. For this fitting, shrinkage has been applied to the distribution of observed effect sizes via James-Stein estimation \citep{james1961estimation} to address the statistical noise in these observed values.

In Figure \ref{fig:PropRunLength}, we compare the run-time of the mSPRT optimized to the fitted distribution, $G$, against the fixed-horizon test at which 80\% average power over $G$ is obtained. The red curve is the empirical distribution for the ratio between the sample size where the mSPRT terminates and this fixed-horizon. For all tiers, the ratio falls below one with high probability. The black curves compare the mSPRT against the fixed-horizon test that the experimenter might choose if she has additional information about the effect size sought, beyond what is captured in the distribution, $G$. Here we suppose that she can estimate the unknown effect up to some specified relative error, and then she selects the sample size that provides 80\% power at her lower bound for the effect. In fact, a very precise estimate is required to achieve a run-time improvement over the mSPRT (a relative error below 50\% would rarely be achievable in practice). Further discussion of Figure \ref{fig:PropRunLength}, and of the broader practical gains associated with our two-stream mSPRT p-values, is given in our companion paper, \cite{johari2017kdd}.}

\section{Multiple testing}
\label{sec:multiple}

In this final section, we examine how always valid p-values and confidence intervals may be combined with existing multiple testing procedures when several experiments are conducted simultaneously. One option is to derive inference measures for each test individually, and these bound the expected proportion of the experiments that incur a Type I error, i.e., the false positive rate. However, that approach can be insufficient if the combination of Type I errors across multiple experiments can have a disproportionate impact on the user's ability to make good decisions. In the multiple testing literature, fixed-horizon p-values and confidence intervals are taken as input, and the procedures output {\it q-values} and corrected confidence intervals that are designed to satisfy a global error constraint that better reflects the overall cost to the user.

Obtaining the same error controls with a data-dependent sample size is highly non-trivial, even if the stopping rule is fixed by the platform. However, always validity provides an opportunity to do so, while still offering the user substantial latitude to choose her own stopping time. In what follows, we consider two leading multiple testing error constraints: family-wise error rate (FWER) and false discovery rate (FDR), defined below.  We obtain conditions on the user's stopping time that ensure these objectives can be bounded by supplying always valid p-values and confidence intervals as input to fixed-horizon procedures in popular use. In this sense, we say that these procedures, as well as the error constraints, {\em ``commute'' with always validity} over a class of stopping times. The resulting always valid {\it q-values} and corrected confidence intervals have both been adopted in the large-scale commercial A/B testing platform, in appropriate contexts.

We suppose that $m$ experiments are initiated at once, and at each successive step one observation is made simultaneously on every experiment.

\subsection{Error constraints}

We focus on the two error functions most extensively studied. The first is the {\em family-wise error rate} (FWER):
\begin{equation*}
\FWER  = \max_{\v{\theta}} \P_{\v{\theta}} (\delta_i = 1 \text{ for at least one } i \text{ s.t. } \theta^i = \theta_0^i ).
\end{equation*}
This is the worst-case probability of incurring any false positive.

The second is the {\em false discovery rate} (FDR):
\begin{equation*}
\FDR = \max_{\v{\theta}} \E_{\v{\theta}} \left \{ \frac{ \# \{ 1 \leq i \leq m : \theta^i = \theta_0^i, \delta_i = 1 \} }{ \# \{ 1 \leq i \leq m : \delta_i = 1 \} \wedge 1 } \right \}.
\end{equation*}
This is the worst-case average proportion of false positives among those experiments where the null hypothesis is rejected. As an example, consider a user who runs multiple experiments in order to compare the performance of the same two variations across different metrics. Each arriving visitor produces one observation for each experiment. FWER control across these experiments may be useful if she must prioritize performance on every metric, so a mistake in just one experiment can be very costly. FDR control may be useful if good performance on balance over many metrics is sufficient.

\subsection{Fixed-horizon procedures}

The goal of a multiple testing procedure is to make a decision on whether to reject or accept each null hypothesis, such that a global error constraint holds. In general, the existing multiple testing literature assumes a fixed-horizon framework. The standard procedure to control the FWER is the Bonferroni correction \citep{dunn1961multiple}: this takes fixed-horizon p-values as input and rejects hypotheses $(1),\ldots(j)$ where $j$ is maximal such that $p^{(j)} \leq \alpha/m$,  and $p^{(1)},\ldots,p^{(m)}$ are the p-values arranged in increasing order.  For FDR, the standard procedure is Benjamini-Hochberg \citep{benjamini1995controlling}, abbreviated as BH. Given fixed-horizon p-values, two versions of BH are used depending on whether the data are known to be independent across experiments.  If independence holds (BH-I), the procedure rejects hypotheses $(1),\ldots,(j)$ where $j$ is maximal such that $p^{(j)} \leq \alpha j / m$; in general (BH-G), the procedure chooses the maximal $j$ such that:
\begin{equation*}
p^{(j)} \leq \frac{\alpha j}{m \sum_{r = 1}^m 1/r}.
\end{equation*}

For the purposes of an A/B testing platform, such a procedure can be viewed as a mapping from the $m$ fixed-horizon p-values to so-called {\it q-values} that can be displayed on an identical dashboard (see Appendix). By thresholding each q-value at $\alpha$, a user can bound the given error function at her desired level. The practical advantages of p-values described in Section \ref{sec:introduction} are preserved: the same q-values can be used by many naive users, each with their own $\alpha$. 

Lastly, these procedures have similar interpretations for confidence intervals. Here the goal is to control false coverage when the user selects some subset of the experiments. One difference is while users typically only view the p-values of significant experiments, they may wish to gauge the range of plausible parameter values even in those experiments where the null hypothesis is not rejected. 

For the Bonferroni procedure, and any set of confidence intervals $I^i(\alpha)$, constructing new intervals $I^i(\alpha/m)$ bounds the probability that a confidence interval fails to cover the true value on any of the selected experiments, giving FWER control.

The analogy to FDR is the {\em False Coverage Rate (FCR)} \citep{benjamini2005false}: the expected proportion of the selected confidence intervals that incur false positives, set at zero if none are selected.  \cite{benjamini2005false} give a procedure to obtain FCR control at a fixed horizon when the experiment selection rule is known: the nominal level $\alpha$ is replaced by $R\alpha/m$ for some $R$ defined in terms of the rule. Here we extend their approach to address unknown selection rules in the fixed-horizon context, which we later use as a first step for sequential FCR control over classes of stopping times. 

We restrict to selection rules that are the union of the discoveries and some fixed set $J$ of experiments, with $j = |J| \ll m$, which are always of interest to the user.  Theorem \ref{thm:fixed_horizon_FCR} then gives a procedure which bounds the FCR in terms of $j$.  The proof is given in the Appendix.  For the procedure described in \cite{benjamini2005false}, it is the aggressive selection rules that choose few experiments which can obtain the highest FCR, and roughly speaking $R$ is a measure of how few experiments the rule can select.  Our approach is to be conservative over the unknown selection rule, taking $R$ for each interval to be the fewest number of experiments that could be selected, given that this interval corresponds to a selected experiment.

\begin{theorem}
\label{thm:fixed_horizon_FCR}
Given fixed-horizon p-values $\v{p}$, let $S^{BH}$ be the rejection set under BH-I, $R^{BH} = |S^{BH}|$, and $(CI^i(1-s))_{i =1}^m$ be the corresponding fixed-horizon CIs at each level $s \in (0,1)$.  Define the corrected confidence intervals:
\begin{equation}
\tilde{CI}^i =
\begin{cases}
CI^i(1-R^{BH}\alpha / m) & i \in S^{BH};\\
CI^i(1-(R^{BH}+1)\alpha / m) & i \notin S^{BH}.
\end{cases}
\end{equation}
Then for any $J$, if the selection rule is the experiments $J \cup S_{BH}$, the FCR is at most $\alpha(1 + j/m)$.
\end{theorem}

\subsection{Commuting with always validity}

Propositions \ref{prop:Bonferroni} and \ref{prop:bh_general} in Appendix \ref{sec:supp_multiple} establish that Bonferroni and BH-G commute with always validity on all p-value processes.  The reason is that, for any always valid p-values and any stopping time, the set of p-values evaluated at that time defines a set of fixed-horizon p-values. This is particularly useful as p-value processes may be replaced by q-value processes on a user's streaming dashboard and still enjoy always valid robustness guarantees. It is easy to show that Bonferroni commutes with always validity for confidence intervals as well.  

BH-I does not commute with always validity over independent p-value processes, however, because stopping times that depend on every experiment can introduce correlation in the p-values at that time (see the Appendix for an example).  Nonetheless, for many natural choices of this stopping time, FDR control is still achieved for any independent always valid p-values.  Theorem \ref{thm:sequential_fdr} gives a sufficient condition on the stopping time.

\begin{definition}
Given independent always valid p-values $\v{p}_n$, let $S^{BH}_n$ be the rejections when BH-I is applied to these at level $\alpha$ and let $R^{BH}_n = |S^{BH}_n|$. Define:
\begin{equation} \nonumber
\begin{split}
&T_r = \inf \{ t \ : \ R_t^{BH} = r \}; \\
&T_r^{+} = \inf \{ t \ : \ R_t^{BH} > r \}; \\
&T_r^i = \inf \{ t \ : \ p_t^i \leq \frac{\alpha r }{m} \}. \\
\end{split}
\end{equation}

Now, if $p_{(1),n}^{-i}, p_{(2),n}^{-i},\ldots$ are the p-values for the experiments other than $i$ placed in ascending order, consider a modified BH procedure that rejects hypotheses $(1),\ldots,(k)$ where $k$ is maximal such that $p_{(k),n}^{-i} \leq \alpha (k+1)/m$, in parallel to the fixed horizon approach in \cite{benjamini1995controlling}.
Define the rejection set $(S^{BH}_n)^{-i}_0$ as those obtained under the original BH-I procedure if $p_n^i = 0$.  Let $(R^{BH}_n)^{-i}_0 = |(S^{BH}_n)^{-i}_0|$ and define:
\begin{equation} \nonumber
\begin{split}
&(T_r)^{-i}_0 = \inf \{ t \ : \ (R^{BH}_n)^{-i}_0 = r \} \\
&(T_r^+)^{-i}_0 = \inf \{ t \ : \ (R^{BH}_n)^{-i}_0 > r \} .
\end{split}
\end{equation}
\end{definition}

We have the following theorem.  The proof can be found in Appendix \ref{sec:supp_multiple}.

\begin{theorem}
\label{thm:sequential_fdr}
Given a stopping time $T$, let $m_0$ be the number of truly null hypotheses and let $I$ be the set of null hypotheses $i$ such that:
\begin{equation}
\label{eq:seq_fdr_requirement}
\sum_{r=1}^m \P \left( (T_{r-1})^{-i}_0 \ \leq \ T < (T_{r-1}^+)^{-i}_0 \ \Big| \  T_r^i \leq T \ , \ T < \infty \right) > 1
\end{equation}
Then the rejection set $S^{BH}_T$ has FDR at most
$$\alpha \left(\frac{m_0}{m} +  \frac{|I| \sum_{k=2}^m \frac{1}{k}}{m} \right).$$
In particular, if we permit only stopping times where $I$ is empty, BH-I controls FDR and so commutes with always validity over all independent processes.
\end{theorem}

We can develop intuition for \eqref{eq:seq_fdr_requirement} by evaluating the condition on common examples. Perhaps the most natural stopping time for a user is the first time some fixed number $x \leq m$ hypotheses are rejected; i.e. $\ T = \inf_n \{ n \ : \ R_n = x \}$.  In that case,
\begin{gather*} 
\P \left( (T_{r-1})^{-i}_0 \ \leq \ T_x < (T_{r-1}^+)^{-i}_0 \ \Big| \  T_r^i \leq T_x \ , \ T_x < \infty \right)
= \P \left( (T_{r-1})^{-i}_0 \ \leq \ T_x < (T_{r-1}^+)^{-i}_0 \ \Big| \ T_x < \infty \right)
\end{gather*}
for each $i$.  This probability is 1 if $r = x$ and 0 otherwise, so $I$ is indeed empty and FDR is controlled.  On the other hand, a natural stopping time where $I$ is non-empty for some p-values is the first time that significance is reached in any of a given subset of experiments, where this subset has between two and $(m-1)$ elements.  A proof is given in Appendix \ref{sec:supp_multiple}, together with simulations showing that FDR control can be violated in this case.

Corrected confidence interval processes that give approximate FCR control can be derived with analogous restrictions by combining the results of Theorem \ref{thm:sequential_fdr} and the methods in Theorem \ref{thm:fixed_horizon_FCR}.
 
\begin{definition}
If $p_n^{i, \theta_0}$ is the p-value for testing $H_0: \theta^i = \theta_0$, let
\begin{align*}
&T_r^{i,\theta_0} = \inf \{ t \ : \ p_t^{i, \theta_0} \leq \frac{\alpha r }{m} \}\\
&(T_r)^{-i, J}_0 = \inf \{ t \ : \ |(S^{BH}_n)^{-i}_0 \cup J \backslash i| = r \} \\
&(T_r^+)^{-i, J}_0 = \inf \{ t \ : \ |(S^{BH}_n)^{-i}_0 \cup J \backslash i| > r \} .
\end{align*}
The last two stopping times denote the first times at least $r$ and more than $r$ experiments other than $i$, respectively, are selected.

If $p_{(1),n}^{-i}, p_{(2),n}^{-i},\ldots$ are the p-values for the experiments other than $i$ placed in ascending order, consider another modified BH procedure that rejects hypotheses $(1),\ldots,(k)$ where $k$ is maximal such that
$$ p_{(k),n}^{-i} \leq \alpha \frac{k}{m}, $$
These are the rejections obtained under the original BH-I procedure if $p_n^i = 1$.  We define stopping times associated with this procedure $(T_r)^{-i,J}_1$ and $(T_r^+)^{-i,J}_1$ analogous to the two stopping times above.
\end{definition}

We have the following theorem.

\begin{theorem}
\label{thm:sequential_FCR}
Given independent always valid p-values $\v{p}_n$ and corresponding CIs $(CI_n^i(1-s))_{i =1}^m$ at each level $s \in (0,1)$, define new confidence intervals:
\begin{equation}
\tilde{CI}_n^i =
\begin{cases}
CI_n^i(1-R_n^{BH}\alpha / m) & i \in S^{BH}_n;\\
CI_n^i(1-(R_n^{BH}+1)\alpha / m) & i \notin S^{BH}_n.
\end{cases}
\end{equation}
Let $J$ be a set of experiments and let $T$ be a stopping time such that the following conditions hold for every $i$, where $\theta^i$ is the true parameter value for that hypothesis:
\begin{align}
\sum_{r = 1}^m \P((T_r)^{-i, J}_0 \leq T < (T_r^+)^{-i, J}_0 | T_r^{i,\theta^i} \leq T < \infty) \leq 1;\\
\sum_{r = 1}^m \P((T_r)^{-i, J}_1 \leq T < (T_r^+)^{-i, J}_1 | T_r^{i,\theta^i} \leq T < \infty) \leq 1.
\end{align}
Then under the selection rule $J \cup S^{BH}_T$, the intervals $(\tilde{CI}_T^i)$ have FCR at most $\alpha(1 + j/m)$.
\end{theorem}

\section{Conclusion}
\label{sec:conclusion}

Our paper derives always valid p-values and confidence intervals for A/B testing.  These allow heterogeneous users to continuously monitor their experiments and to derive inferences at any time, which efficiently trade-off power and run-time for their needs.  In addition, we have identified how these measures may be combined with multiple hypothesis testing corrections to achieve sequential multiple testing controls.

We have only needed to place weak assumptions on the data generating processes; namely that observations are binary or normally distributed and independent across successive visitors entering the experiment.  However, in some contexts, A/B testing data can be heavily right-tailed \citep{fithian2014semiparametric}.  Various methods for modeling this skewed data are used in practice, and construction of always valid inference measures under these models would be a useful extension to this paper.  Further, the assumption of independence can fail due to seasonal effects, which induces correlations between visitors who arrive at similar times during the experiment.  Our implementation paper \citep{johari2017kdd} provides a simple heuristic (a ``reset policy") which identifies when seasonality may have led to inaccurate inference measures and applies conservative corrections to address this.  In future work, we aim to tackle the issue in more detail, deriving inference measures from an extension of the mSPRT that models the time dependence directly.

This paper has only considered the case where incoming visitors are randomized to treatments independently of the data collected so far.  In fact, it can be shown that the inference measures achieve Type I error control at any stopping time, even if treatments are assigned according to a bandit algorithm (see Section \ref{sec:related}).  Unifying these measures with bandit approaches would be valuable future work.  In particular, one might ask what allocation policy enables the mSPRT to achieve significance most quickly, or how the power and relative run-time profiles of always valid measures are impacted if the allocation is optimized to another objective such as regret minimization.

\theendnotes



\bibliographystyle{informs2014} 
\bibliography{sequential} 

\begin{thebibliography}{45}
\providecommand{\natexlab}[1]{#1}
\providecommand{\url}[1]{\texttt{#1}}
\providecommand{\urlprefix}{URL }

\bibitem[{Abbasi-Yadkori et~al.(2011)Abbasi-Yadkori, P{\'a}l, \protect\BIBand{}
  Szepesv{\'a}ri}]{abbasi2011improved}
Abbasi-Yadkori Y, P{\'a}l D, Szepesv{\'a}ri C (2011) Improved algorithms for
  linear stochastic bandits. \emph{Advances in Neural Information Processing
  Systems}, 2312--2320.

\bibitem[{Balsubramani(2014)}]{balsubramani2014sharp}
Balsubramani A (2014) Sharp finite-time iterated-logarithm martingale
  concentration. \emph{arXiv preprint arXiv:1405.2639} .

\bibitem[{Balsubramani \protect\BIBand{}
  Ramdas(2015)}]{balsubramani2015sequential}
Balsubramani A, Ramdas A (2015) Sequential nonparametric testing with the law
  of the iterated logarithm. \emph{arXiv preprint arXiv:1506.03486} .

\bibitem[{Benjamini \protect\BIBand{}
  Hochberg(1995)}]{benjamini1995controlling}
Benjamini Y, Hochberg Y (1995) Controlling the false discovery rate: a
  practical and powerful approach to multiple testing. \emph{Journal of the
  Royal Statistical Society. Series B (Methodological)} 289--300.

\bibitem[{Benjamini \protect\BIBand{} Yekutieli(2001)}]{benjamini2001control}
Benjamini Y, Yekutieli D (2001) The control of the false discovery rate in
  multiple testing under dependency. \emph{Annals of statistics} 1165--1188.

\bibitem[{Benjamini \protect\BIBand{} Yekutieli(2005)}]{benjamini2005false}
Benjamini Y, Yekutieli D (2005) False discovery rate--adjusted multiple
  confidence intervals for selected parameters. \emph{Journal of the American
  Statistical Association} 100(469):71--81.

\bibitem[{Bubeck et~al.(2012)Bubeck, Cesa-Bianchi et~al.}]{bubeck2012regret}
Bubeck S, Cesa-Bianchi N, et~al. (2012) Regret analysis of stochastic and
  nonstochastic multi-armed bandit problems. \emph{Foundations and
  Trends{\textregistered} in Machine Learning} 5(1):1--122.

\bibitem[{Bubeck et~al.(2009)Bubeck, Munos, \protect\BIBand{}
  Stoltz}]{bubeck2009pure}
Bubeck S, Munos R, Stoltz G (2009) Pure exploration in multi-armed bandits
  problems. \emph{International conference on Algorithmic learning theory},
  23--37 (Springer).

\bibitem[{Darling \protect\BIBand{} Robbins(1967)}]{darling1967confidence}
Darling D, Robbins H (1967) Confidence sequences for mean, variance, and
  median. \emph{Proceedings of the National Academy of Sciences of the United
  States of America} 58(1):66.

\bibitem[{{de la Pe{\~n}a} et~al.(2008){de la Pe{\~n}a}, Lai, \protect\BIBand{}
  Shao}]{pena2008self}
{de la Pe{\~n}a} VH, Lai TL, Shao QM (2008) \emph{Self-normalized processes:
  Limit theory and Statistical Applications} (Springer Science \& Business
  Media).

\bibitem[{Demets \protect\BIBand{} Lan(1994)}]{demets1994interim}
Demets DL, Lan KG (1994) Interim analysis: the alpha spending function
  approach. \emph{Statistics in medicine} 13(13-14):1341--1352.

\bibitem[{Dunn(1961)}]{dunn1961multiple}
Dunn OJ (1961) Multiple comparisons among means. \emph{Journal of the American
  Statistical Association} 56(293):52--64.

\bibitem[{Even-Dar et~al.(2002)Even-Dar, Mannor, \protect\BIBand{}
  Mansour}]{even2002pac}
Even-Dar E, Mannor S, Mansour Y (2002) Pac bounds for multi-armed bandit and
  markov decision processes. \emph{International Conference on Computational
  Learning Theory}, 255--270 (Springer).

\bibitem[{Fithian \protect\BIBand{} Wager(2014)}]{fithian2014semiparametric}
Fithian W, Wager S (2014) Semiparametric exponential families for heavy-tailed
  data. \emph{Biometrika} 102(2):486--493.

\bibitem[{Foster \protect\BIBand{} Stine(2008)}]{foster2008alpha}
Foster DP, Stine RA (2008) $\alpha$-investing: a procedure for sequential
  control of expected false discoveries. \emph{Journal of the Royal Statistical
  Society: Series B (Statistical Methodology)} 70(2):429--444.

\bibitem[{Ghosh \protect\BIBand{} Sen(1991)}]{ghosh1991handbook}
Ghosh BK, Sen PK (1991) \emph{Handbook of sequential analysis} (CRC Press).

\bibitem[{Hoeffding(1960)}]{hoeffding1960lower}
Hoeffding W (1960) Lower bounds for the expected sample size and the average
  risk of a sequential procedure. \emph{The Annals of Mathematical Statistics}
  352--368.

\bibitem[{Howard et~al.(2018)Howard, Ramdas, McAuliffe, \protect\BIBand{}
  Sekhon}]{howard2018uniform}
Howard SR, Ramdas A, McAuliffe J, Sekhon J (2018) Uniform, nonparametric,
  non-asymptotic confidence sequences. \emph{arXiv preprint arXiv:1810.08240} .

\bibitem[{James \protect\BIBand{} Stein(1961)}]{james1961estimation}
James W, Stein C (1961) Estimation with quadratic loss. \emph{Proceedings of
  the fourth Berkeley symposium on mathematical statistics and probability},
  volume~1, 361--379.

\bibitem[{Jamieson \protect\BIBand{} Jain(2018)}]{jamieson2018bandit}
Jamieson K, Jain L (2018) A bandit approach to multiple testing with false
  discovery control. \emph{arXiv preprint arXiv:1809.02235} .

\bibitem[{Jamieson et~al.(2014)Jamieson, Malloy, Nowak, \protect\BIBand{}
  Bubeck}]{jamieson2014lil}
Jamieson K, Malloy M, Nowak R, Bubeck S (2014) lil'ucb: An optimal exploration
  algorithm for multi-armed bandits. \emph{Conference on Learning Theory},
  423--439.

\bibitem[{Javanmard \protect\BIBand{} Montanari(2016)}]{javanmard2016online}
Javanmard A, Montanari A (2016) Online rules for control of false discovery
  rate and false discovery exceedance. \emph{arXiv preprint arXiv:1603.09000} .

\bibitem[{Johari et~al.(2017)Johari, Koomen, Pekelis, \protect\BIBand{}
  Walsh}]{johari2017kdd}
Johari R, Koomen P, Pekelis L, Walsh D (2017) Peeking at a/b tests: Why it
  matters, and what to do about it. \emph{Proceedings of the 23rd ACM SIGKDD
  International Conference on Knowledge Discovery and Data Mining}, 1517--1525,
  KDD '17 (New York, NY, USA: ACM), ISBN 978-1-4503-4887-4,
  \urlprefix\url{http://dx.doi.org/10.1145/3097983.3097992}.

\bibitem[{Kalyanakrishnan et~al.(2012)Kalyanakrishnan, Tewari, Auer,
  \protect\BIBand{} Stone}]{kalyanakrishnan2012pac}
Kalyanakrishnan S, Tewari A, Auer P, Stone P (2012) Pac subset selection in
  stochastic multi-armed bandits. \emph{ICML}, volume~12, 655--662.

\bibitem[{Kaufmann et~al.(2014)Kaufmann, Capp{\'e}, \protect\BIBand{}
  Garivier}]{kaufmann2014complexity}
Kaufmann E, Capp{\'e} O, Garivier A (2014) On the complexity of a/b testing.
  \emph{arXiv preprint arXiv:1405.3224} .

\bibitem[{Kohavi et~al.(2013)Kohavi, Deng, Frasca, Walker, Xu,
  \protect\BIBand{} Pohlmann}]{kohavi2013online}
Kohavi R, Deng A, Frasca B, Walker T, Xu Y, Pohlmann N (2013) Online controlled
  experiments at large scale. \emph{Proceedings of the 19th ACM SIGKDD
  international conference on Knowledge discovery and data mining}, 1168--1176
  (ACM).

\bibitem[{Lai(2001)}]{lai2001sequential}
Lai TL (2001) Sequential analysis: Some classical problems and new challenges.
  \emph{Statistica Sinica} 11:303--408.

\bibitem[{Lai \protect\BIBand{} Siegmund(1977)}]{LS77}
Lai TL, Siegmund D (1977) A nonlinear renewal theory with applications to
  sequential analysis i. \emph{The Annals of Statistics} 5(5):946--954,
  \urlprefix\url{http://dx.doi.org/10.1214/aos/1176343950}.

\bibitem[{Lai \protect\BIBand{} Siegmund(1979)}]{LS1979}
Lai TL, Siegmund D (1979) A nonlinear renewal theory with applications to
  sequential analysis ii. \emph{The Annals of Statistics} 7(1):60--76,
  \urlprefix\url{http://dx.doi.org/10.1214/aos/1176344555}.

\bibitem[{Lai \protect\BIBand{} Wang(1994)}]{LW94}
Lai TL, Wang JQ (1994) Asymptotic expansions for the distributions of stopped
  random walks and first passage times. \emph{The Annals of Probability}
  1957--1992.

\bibitem[{Lattimore \protect\BIBand{}
  Szepesv{\'a}ri(2018)}]{lattimore2018bandit}
Lattimore T, Szepesv{\'a}ri C (2018) Bandit algorithms. \emph{preprint} .

\bibitem[{Lehmann et~al.(1986)Lehmann, Romano, \protect\BIBand{}
  Casella}]{lehmann1986testing}
Lehmann EL, Romano JP, Casella G (1986) \emph{Testing statistical hypotheses},
  volume 150 (Wiley New York et al).

\bibitem[{Malek et~al.(2017)Malek, Katariya, Chow, \protect\BIBand{}
  Ghavamzadeh}]{malek2017sequential}
Malek A, Katariya S, Chow Y, Ghavamzadeh M (2017) Sequential multiple
  hypothesis testing with type i error control. \emph{Artificial Intelligence
  and Statistics}, 1468--1476.

\bibitem[{Miller(2010)}]{miller_blogpost}
Miller E (2010) How not to run an {A/B} test
  \urlprefix\url{http://www.evanmiller.org/how-not-to-run-an-ab-test.html},
  blog post.

\bibitem[{Miller(2015)}]{miller2015simple}
Miller E (2015) Simple sequential a/b testing
  \urlprefix\url{http://www.evanmiller.org/sequential-ab-testing.html}, blog
  post.

\bibitem[{Pollak \protect\BIBand{} Siegmund(1975)}]{PS1975}
Pollak M, Siegmund D (1975) Approximations to the expected sample size of
  certain sequential tests. \emph{The Annals of Statistics} 3(6):1267--1282,
  \urlprefix\url{http://dx.doi.org/10.1214/aos/1176343284}.

\bibitem[{Robbins(1970)}]{robbins1970statistical}
Robbins H (1970) Statistical methods related to the law of the iterated
  logarithm. \emph{The Annals of Mathematical Statistics} 1397--1409.

\bibitem[{Robbins \protect\BIBand{} Siegmund(1974)}]{RS74}
Robbins H, Siegmund D (1974) The expected sample size of some tests of power
  one. \emph{The Annals of Statistics} 415--436.

\bibitem[{Scott(2015)}]{scott2015multi}
Scott SL (2015) Multi-armed bandit experiments in the online service economy.
  \emph{Applied Stochastic Models in Business and Industry} 31(1):37--45.

\bibitem[{Siegmund(1978)}]{siegmund1978estimation}
Siegmund D (1978) Estimation following sequential tests. \emph{Biometrika}
  65(2):341--349.

\bibitem[{Siegmund(1985)}]{siegmund1985sequential}
Siegmund D (1985) \emph{Sequential analysis: tests and confidence intervals}
  (Springer).

\bibitem[{Tang et~al.(2010)Tang, Agarwal, O'Brien, \protect\BIBand{}
  Meyer}]{tang2010overlapping}
Tang D, Agarwal A, O'Brien D, Meyer M (2010) Overlapping experiment
  infrastructure: More, better, faster experimentation. \emph{Proceedings of
  the 16th ACM SIGKDD international conference on Knowledge discovery and data
  mining}, 17--26 (ACM).

\bibitem[{Wald(1945)}]{wald1945sequential}
Wald A (1945) Sequential tests of statistical hypotheses. \emph{The Annals of
  Mathematical Statistics} 16(2):117--186.

\bibitem[{Yang et~al.(2017)Yang, Ramdas, Jamieson, \protect\BIBand{}
  Wainwright}]{yang2017framework}
Yang F, Ramdas A, Jamieson K, Wainwright MJ (2017) A framework for multi-a
  (rmed)/b (andit) testing with online fdr control. \emph{arXiv preprint
  arXiv:1706.05378} .

\bibitem[{Zhao et~al.(2016)Zhao, Zhou, Sabharwal, \protect\BIBand{}
  Ermon}]{zhao2016adaptive}
Zhao S, Zhou E, Sabharwal A, Ermon S (2016) Adaptive concentration inequalities
  for sequential decision problems. \emph{Advances in Neural Information
  Processing Systems}, 1343--1351.

\end{thebibliography}



\begin{APPENDICES}

\section{Proofs of optimality results}
\label{sec:supp_optimality}

\begin{proof}{Proof of Theorem \ref{thm:efficiency}.}
To establish asymptotic efficiency, given $(M,\alpha)$, it is sufficient to find some $\theta_1$, where for every feasible test $(T^*,\delta^*)$ with $\nu^*(\theta_1) \geq \nu(\theta_1;M,\alpha)$, we have that $\rho^*(\theta_1) \geq \rho(\theta_1;M,\alpha)(1 + o(1))$.

	Since the family $F_\theta$ can be equivalently viewed as exponential tilts of any $\theta' \in \Theta$, we assume $\theta_0 = 0$ wlog and write $I(\theta) := I(\theta,0)$.
\rj{Theorem 2 of \cite{LS77} can be used to establish that a normal approximation holds asymptotically for $\P_\theta(\delta(M, \alpha) = 0)$}; in particular, we have that for any fixed $\theta$,
\[\P_\theta ( \delta(M,\alpha) = 0 ) = \bar{\Phi} \left\{ \log (1 / \alpha)^{1/2} B(M,\alpha,\theta) \right\}(1 + o(1) )\]
where $B = \left( \frac{ I(\theta)^3 }{\theta^2 \psi^{''}(\theta)} \right)^{1/2} \left( \frac{ M }{ \log (1/\alpha) } - I(\theta)^{-1} \right).$  On the other hand, standard results on the log partition function $\psi$ imply that for fixed $(M,\alpha)$,
$$ \log (1/\alpha) ^{1/2} B(\theta) \sim \eta_2 \log (1/\alpha)^{1/2} \left(  \frac{ M \theta^2 }{ \log (1/\alpha) } - \eta_3 \right) $$
as $\theta \rightarrow 0$, where $\eta_2$ and $\eta_3$ are both positive constants.  Combining the two results, it follows that for $\theta_1 = \sqrt{\frac{\log (1/\alpha)}{M} ( \sqrt{2 / \eta_2} + \eta_3 )}$, we have that eventually
$$ \P_{\theta_1} ( \delta(M,\alpha) = 0 ) \leq \bar{\Phi}\left( \sqrt{2 \log (1/\alpha)} \right) := \beta_1 $$
i.e. the mSPRT has power at least $1 - \beta_1$ at $\theta_1$ in the limit.
Suppose that $(T^*,\delta^*)$ is another feasible test that achieves this power at $\theta_1$. Once $\alpha$ is sufficiently small that $0 < \alpha + \beta_1 < 1$, \rj{we can take advantage of a lower bound on the expected sample size of sequential testing procedures \citep{hoeffding1960lower} to show that for any $\theta \in (0, \theta_1)$,}
\begin{align*}
\E_\theta(T^*) &\geq \frac{ | \log( \alpha + \beta_1) | - \frac{1}{2} \theta_1^2 \psi''(\theta)  | \log( \alpha + \beta_1) |^{1/2} }{\max\{ I(\theta), I(\theta,\theta_1) \}}\\
&= I(\theta)^{-1} \log (1/\alpha) (1 + o(1) )
\end{align*}
By continuity, the result holds at $\theta_1$ also.  Comparing the above expression with (\ref{eq:limit_prob}) gives the desired inequality on the relative run-times at $\theta_1$.\hfill$\Box$
\end{proof}

\begin{proof}{Proof of Proposition \ref{prop:conservative}.}
Again wlog we assume $\theta_0 = 0$.  We fix $\theta \neq 0$, and for contradiction, we suppose that there is some $\beta < 1$ such that feasible tests with $\P_{\theta}(\delta^*(M,\alpha) = 0) \leq \beta$ exist in this limit.  Combining this Type II error bound with the Type I error bound at $\alpha$, the same lower bound of \cite{hoeffding1960lower} used above implies the existence of some $\kappa$ such that
\[ \E_\theta(T^*) \geq \kappa \log (1/\alpha) (1 + o(1)). \]
In the limit, this expectation exceeds $M$, so $T^*$ must certainly exceed $M$ with positive probability.\hfill$\Box$
\end{proof}

We now prove three lemmas that will let us prove Theorem \ref{thm:optimal_mixture} and Proposition \ref{prop:improvement}.
\begin{lemma}
\label{lem:exp_cgs}
Given $H, \theta \neq \theta_0$, there exists a $\lambda > 0$ such that for any $0 < \epsilon < 1$
\begin{equation} \P_\theta \left \{ | T^H(\alpha) - \frac{\log (1/\alpha)}{I(\theta, \theta_0)} | > \epsilon \, \left ( \frac{\log (1/\alpha)}{I(\theta, \theta_0)} \right ) \right \} = O(\alpha^{\lambda}). \end{equation}
\end{lemma}
\begin{proof}{Proof.}
	\rj{The proof follows by combining Lemmas 2 and 3 of \cite{PS1975} with Lemma 6 of \cite{LW94}.} \lp{Lemma 3 of \cite{PS1975} provides the upper bound for the case $T^H(\alpha) < \frac{\log (1/\alpha)}{I(\theta, \theta_0)}$. The other comes from well known exponential concentration bounds on the maximum deviation of a sample average from the corresponding mean (Lemma 2 of \cite{PS1975}), combined with the method of proof of Lemma 6 of \cite{LW94}, which shows similar bounds for stopping times of the form

\[ T_a = \inf \{ n \geq n_a : n \xi (S_n)  \geq a \}, \]

where $\xi$ is a smooth, positive function and $S_n$ is the sample average after $n$ observations, and a standard application of Jensen's inequality to bound $T^H(\alpha)$ by stopping times of the above form.}\hfill$\Box$
\end{proof}

\begin{lemma}
\label{lemma:ess_truncated}
Let
$$ A = \left \{ \theta : I(\theta, \theta_0) \geq \frac{\log (1/\alpha)}{M} \right \}.$$
Then there holds:
\begin{equation} \label{eq:thm1a}
M \rho(M,\alpha) =  \E_{\theta \sim G} \left\{ \v{1}_A \E_\theta ( T^H(\alpha) ) \right\}  + M \, Pr_{G(\theta)} \left( \bar{A} \right) + o(1).
\end{equation}
\end{lemma}
\begin{proof}{Proof.}
Let $0 < \epsilon < 1$. Define two times, $n_1 = (1-\epsilon) \log(1/\alpha) / I(\theta, \theta_0)$, $n_2 = (1+\epsilon) \log(1/\alpha) / I(\theta, \theta_0)$.
For $\theta \in \bar{A}$, we have the following bounds, where the final inequality is an application of (\ref{eq:limit_prob}) in probability:
\begin{align*}
M \geq \E_\theta(T(M,\alpha)) &\geq (n_1 \wedge M) \P_\theta( T^H(\alpha) > n_1)\\
&\geq (1 - \epsilon) \, M \, \P_\theta( T^H(\alpha) > n_1) \geq (1 - \epsilon) \, M + o(1).
\end{align*}

Let
$$ B^\epsilon = \left \{ \theta : I(\theta, \theta_0) \geq (1 + \epsilon) \left ( \frac{\log (1/\alpha)}{M} \right ) \right \},$$
for $\theta \in B^\epsilon$, $M \geq n_2$. Thus
$$ \E_\theta(T^H(\alpha)) \geq \E_\theta( T(M,\alpha) ) \geq \E_\theta( T(n_2,\alpha) ) \geq \int_{T^H(\alpha) \leq n_2} T^H(\alpha) d\P_\theta = \E_\theta(T^H(\alpha)) - \int_{T^H(\alpha) > n_2} T^H(\alpha) d\P_\theta. $$
By Cauchy-Schwartz, (\ref{eq:limit_prob}) in $\mathcal{L}^2$ and Lemma \ref{lem:exp_cgs},
$$ \int_{T^H(\alpha) > n_2} T^H(\alpha) d\P_\theta \leq \left( \E_\theta(T^H(\alpha)^2) \P_\theta(T^H(\alpha) \geq n_2) \right)^{1/2} = O( \alpha^{-\lambda/2} \log \alpha^{-1/2}) = o(1). $$

For $\theta \in A \backslash B^\epsilon$,
\begin{align*}
\E_\theta(T^H(\alpha)) \geq \E_\theta( T(M,\alpha) ) &= \E_\theta(T^H(\alpha)) + \int_{T^H(\alpha) \geq n_2} \{ n_S - T^H(\alpha) \} \, d\P_\theta\\
&\geq \E_\theta(T^H(\alpha)) + \int_{M \leq T < n_2} \{ n_S - T^H(\alpha) \} d\P_\theta - \int_{T^H(\alpha) > n_2} T^H(\alpha) d\P_\theta\\
&\geq \E_\theta(T^H(\alpha)) - (n_2 - M) + o(1) \geq \E_\theta(T^H(\alpha)) - \epsilon \, M + o(1).
\end{align*}

Putting the three cases together, we integrate over $\theta \sim G$ to obtain (\ref{eq:thm1a}), up to some error linear in $\epsilon$. To justify this step, it is easy to check that each term is finite. The result now holds on letting $\epsilon \to 0$.\hfill$\Box$
\end{proof}

\begin{lemma} \label{lemma:fast_detection_asymptotic_power}
	Let
	\begin{equation} \nonumber \begin{split}
	C(\alpha) &= \int_0^1 \Phi \left( \sqrt{ \frac{1}{2} \log (1/\alpha) } ( x^2 - 1) \right) dx, \\
	C_{f}(\alpha) &= \int_0^1 \Phi \left( \sqrt{ \log (1/\alpha) } (x - 1 ) \right) dx.
	\end{split}
	\end{equation}
	For the prior $G = N(0,\tau^2)$, let $\nu(M,\alpha)$ be the average power of the mSPRT. If $M = O(\log(1/\alpha))$,
	\[ \nu(M,\alpha) \sim C(\alpha) \frac{2 \sqrt{2}}{\tau} \left( \frac{ \log(1/\alpha)}{M} \right)^{1/2} \]
	Let $\nu_f(n,\alpha)$ be the average power of the fixed-horizon test with sample size $n$.  If $n = \Omega(\log(1/\alpha))$,
	\[ \nu_f(n,\alpha) \sim C_f(\alpha) \frac{2 \sqrt{2}}{\tau} \left( \frac{ \log (1/\alpha)}{n} \right)^{1/2} \]
\end{lemma}
\begin{proof}{Proof.}
	Wlog we suppose $\theta_0 = 0$. We begin with the fixed horizon result. It is simple to show that $z_{1 - 2 \alpha} \sim \sqrt{2 \log(1/\alpha) }$ as $\alpha \rightarrow 0$. Hence
	$$ \nu_f(\theta) = \bar{\Phi}\left( | \theta| \sqrt{n} - z_{1 - 2\alpha} \right) = \bar{\Phi}\left( \log(1/\alpha)^{1/2} S_f(\theta,n,\alpha)  \right) $$
	where $S_f(\theta,n,\alpha) = |\theta| \left( \frac{ \log (1/\alpha)}{n} \right)^{-1/2} - \sqrt{2} $.

	Let $B_f = \{ \theta \ : \ A_f(\theta,n,\alpha) \geq \sqrt{2} \}$. We split up the average power as
	$$ \nu_f = \int_{B_f} \nu_f(\theta) \frac{1}{\tau} \phi(\theta / \tau) d\theta + \int_{\bar{B}_f} - \nu_f(\theta) \frac{1}{\tau} \phi(\theta / \tau)d\theta $$
	denoting the two terms by (i) and (ii) respectively. For $\theta \in B_f$, the standard tail bound on the normal CDF, $\bar{\Phi}(x) \leq x^{-1} \phi(x)$ gives
	\begin{equation} \nonumber
	\begin{split}  \bar{\Phi}\left( \log (1/\alpha)^{1/2} S_f(\theta,n,\alpha)  \right) &\leq ( 4 \pi \log(1/\alpha) )^{-1/2} \alpha \\ &= o(\alpha),
	\end{split}
	\end{equation}
	so that $(i) = o(\alpha)$ as well. For term $(ii)$, we note that $\bar{B}_f \rightarrow \{0\}$ so that $\phi(\theta / \tau) \sim 1$. This, the change of variable $x = \left( \frac{ 2 \log(1/\alpha)}{n} \right)^{-1/2} \theta$ and symmetry of the integrand give
	\[ (ii) \sim \frac{2 \sqrt{2}}{\tau} \left( \frac{ \log(1/\alpha)}{n} \right)^{1/2} \int_0^2 \bar{\Phi} \left( ( \log \alpha^{-1} )^{1/2} (x - 1) \right) dx. \]
	The result follows on noting $ \bar{\Phi} \left( \log(1/\alpha)^{1/2} (x - 1) \right) = o(1)$ when $x > 1$.

	For the mSPRT, we use the normal approximation to the tail probabilities of the mSPRT stopping time from Theorem 2 of \cite{LS77} which, in the case of standard normal data, gives
	\[ \P_\theta ( T(\alpha) > M ) \sim \bar{\Phi} \left\{ \log (1/\alpha)^{1/2} S(\theta,M,\alpha) \right\} \]
	where $S(\theta,M,\alpha) = \frac{1}{2 \sqrt{2}} \left \{  \theta^2 \left (\frac{ \log (1/\alpha)}{M} \right )  - 2 \right \} $. The rest of the proof proceeds as for the fixed horizon test, except with $B = \{ \theta \ : \ S \geq \sqrt{2} \}$ and changes in the integrand of $(ii)$ as stated in the proposition.\hfill$\Box$
\end{proof}

\lp{
The next two results rely on equation (67) of \cite{LS1979}, which gives an approximation to $\E_\theta ( T(\alpha) )$ as $\alpha \rightarrow 0$. We re-print the equation here in our notation for easy reference:
\begin{equation}
\label{eq:ls1979_67}
I(\theta, \theta_0) \E_\theta ( T(\alpha) ) = \log \alpha^{-1} + \frac{1}{2}\left[ \log\left(\frac{\log \alpha^{-1}}{I(\theta, \theta_0)}\right) - \log\left( \frac{2\pi (H'(\theta))^2}{\psi''(\theta)}\right) - \frac{\sigma^2}{\psi''(\theta)}\right] + \frac{\E[R^2]}{2 \E[R]} + o(1)
\end{equation}
where prime and double prime denote first and second derivative, $H'$ is assumed to exist in a neighborhood of $\theta$, $\sigma^2 = \E X_1^2 - (\E X_1 )^2$, and $R = \inf \{ n : S_n > 0 \}$, the renewal time of $S_n$ at 0.

The authors also note that while the renewal term is in general difficult to evaluate, it simplifies in the normal case  $f_\theta(x) = \phi(x-\theta)$ to the following expression:
\[  \frac{\E[R^2]}{2 \E[R]} = 2 + \frac{1}{2} \theta^2 - 2 \theta B \left( \frac{\theta}{2} \right) \]
where
\[ B(u) = \sum_{k = 1}^{\infty} k^{-1/2} \phi( u k^{1/2} ) - u \Phi ( - u k^{1/2} ), \]
which we use in the proof of Proposition \ref{prop:improvement} below.
}

\begin{proof}{Proof of Theorem \ref{thm:optimal_mixture}.}

	Combining Lemma \ref{lemma:ess_truncated} with \eqref{eq:ls1979_67}, we find that, up to $o(1)$,
	$$ M \rho(M,\alpha) = -2 \E_{\theta \sim G} \v{1}_A I(\theta, \theta_0)^{-1} \log h_\gamma(\theta) + K(G, \alpha)$$
	for some function $K$ not depending on $\gamma$. The stated $\gamma^*$ is the minimizer of this expression.
\hfill$\Box$
\end{proof}

\begin{proof}{Proof of Proposition \ref{prop:improvement}.}
From Lemma \ref{lemma:fast_detection_asymptotic_power}, we see that to match the average power of the truncated mSPRT ($\nu_f = \nu$), the calibrated fixed-horizon test must have sample size $O(M)$; i.e. $\rho_f = O(1)$.  Thus it is sufficient to show that $\rho = o(1)$.

Again we take $\theta_0 = 0$ wlog, so for standard normal data $I(\theta, 0) = \theta^2 / 2$. We invoke Lemma \ref{lemma:ess_truncated} and attack the two terms in that result separately. Let $\delta = \left( \frac{2 \log (1/\alpha)}{M} \right)^{1/2}$.
\[ Pr_{\theta \sim N(0,\tau)} ( \bar{A} ) = 2 \int_0^\delta \frac{1}{\tau} \phi(\theta / \tau ) d \theta \sim \frac{2 \sqrt{2}}{\tau} \left( \frac{ \log (1/\alpha)}{M} \right)^{1/2} = o(1) \]
by similar arguments to those in the proof of Lemma \ref{lemma:fast_detection_asymptotic_power}.

The first term in Lemma \ref{lemma:ess_truncated} is more complicated. By equation \eqref{eq:ls1979_67} for the case of Normal data,
\begin{equation} \label{exp_time_decomp}
\begin{split}  \E_\theta ( T(\alpha) ) &= 2 \theta^{-2} \log (1/\alpha) + \theta^{-2} \log \log (1/\alpha) + D_1 \theta^{-2} \log | \theta | \\
&+ D_2 \theta^{-2} + D_3 + D_4 \theta^{-1} B( \theta / 2) + o(1)
\end{split}
\end{equation}
as $\alpha \rightarrow 0$.
It remains to show that each term in (\ref{exp_time_decomp}) has $o(M)$ expectation on $1_A$. We focus on terms 1, 3 \& 6 as the remainder are clearly lower order.

{\em Term 1.}
$$ \E_{\theta \sim N(0,\tau)} ( \v{1}_A \theta^{-2} ) = 4 \int_\delta^\infty \theta^{-2} \frac{1}{\tau} \phi \left ( \frac{\theta}{ \tau} \right) d \theta = \frac{4}{\tau^2} \left \{ \frac{\tau}{\delta} \phi \left( \frac{\delta}{\tau} \right) - \bar{\Phi} \left( \frac{\delta}{\tau} \right) \right \} $$
which is bounded over $\alpha \in (0, 1 - \epsilon)$. It follows that
\[  \E_{\theta \sim N(0,\tau)} ( \v{1}_A \theta^{-2} ) \log (1/\alpha) \sim \frac{ 2 \sqrt{2} }{\tau} M \left( \frac{ \log (1/\alpha)}{M} \right)^{1/2} = o(M). \]

{\em Term 3.} By calculus,
\begin{equation} \nonumber \begin{split}
 &\E_{\theta \sim N(0,\tau)} ( \v{1}_A \theta^{-2} \log | \theta | \} \propto \int_\delta^\infty \theta^{-2} \log \theta e^{-\theta^2 / 2 \tau^2 } \\
 &= \frac{1}{4} \left[ \frac{1}{\sqrt{2} \tau} \Gamma \left( - \frac{1}{2} \ , \ \frac{\delta^2}{2 \tau^2} \right) \log \delta \right. \\
 &+ \left. \delta^{-1} \text{MeijerG} \left( \{ \{\}, \{ \frac{3}{2} , \frac{3}{2} \} , \{ \{0,\frac{1}{2},\frac{1}{2}\},\{\}\} , \frac{\delta^2}{2 \tau^2} \right) \right],
 \end{split} \end{equation}
The $\text{MeijerG}$ term is asymptotically constant as $\delta \rightarrow 0$, and
\[ \Gamma \left( - \frac{1}{2} \ , \ \frac{\delta^2}{2 \tau^2} \right) \rightarrow 2 \sqrt{2} \tau \delta^{-1}. \]
It follows that
$$ \E_{\theta \sim N(0,\tau)} ( \v{1}_A \theta^{-2} \log | \theta | ) \propto M \left[ K_1 \left( M \log (1/\alpha) \right)^{-1/2} + K_2 \left( \frac{ \delta \log \delta }{\log (1/\alpha)} \right) \right] $$
where $K_i$ are both constants depending on $\tau$. Both terms in the bracketed sum clearly converge to $0$ since $\delta \rightarrow 0$.

{\em Term 6.} By standard bounds on the normal CDF, $B(u) \geq 0$ and
\begin{equation} \nonumber \begin{split} &B(u) \leq u^{-2} \left( \int_1^\infty x^{-3/2} \phi(u x^{1/2}) dx + \phi(u) \right) \\
&= \theta^{-2} \left( 3 \phi(\theta) - 2 \theta \Phi( - \theta ) \right)
  \end{split} \end{equation}
Hence,
\begin{equation} \nonumber \begin{split}  \E_{\theta \sim N(0,\tau)} ( \v{1}_A \theta^{-1} B( \theta / 2)) \leq K_3 \, \E_{\theta \sim N(0,\tau)} ( \v{1}_A \theta^{-3} \phi(\theta / 2) ) + K_4 \\
\leq K_4 \delta^{-2} e^{K_5 \delta^2} - K_7 \Gamma(0, K_6 \delta^2 ) + K_4,
\end{split}\end{equation}
where $\delta^{-2} = M / \log (\alpha^{-2}) = o(M)$ and $\Gamma(0, K_6 \delta^2 ) \sim \log K_6 / \delta^2 = O( \log \delta ) = o(M)$.
\hfill$\Box$
\end{proof}

\section{Optimality for two-stream normal data}
\label{sec:supp_abtests}

Given a choice of mixing distribution $H$, the two-stream p-values are derived from the mSPRT which rejects the null if
\begin{equation} \Lambda_n^H(S_n) = \int_{\Theta} \left( \frac{\tilde{f}_\theta(S_n)}{\tilde{f}_{\theta_0}(S_n)}\right)^n dH(\theta)
\end{equation}
ever exceeds $\alpha$, where $S_n = \frac{1}{n} \sum_{i=1}^n W_i$.  First we notice that $\Lambda_n^H$ depends on the data only through $(-1,1)^T S_n \sim N(\theta, 2\sigma^2/n)$, so the power and the run-time of this test do not depend on $\mu$.  Let $\nu^H(\theta; M, \alpha)$, $\rho^H(\theta; M, \alpha)$ be the power and average relative run-length of the truncated test.  We say that the {\em relative efficiency} of this test at $(M, \alpha)$ is
\[ \phi^H(M, \alpha) = \inf_{(T^*, \delta^*)} \inf_{\theta \neq \theta_0, \mu} \frac{\rho^*(\theta, \mu)}{\rho^H(\theta; M, \alpha)} \]
where the infimum is taken over all tests with $T^* \leq M$ a.s., $\sup_\mu \nu(\theta_0, \mu) \leq \alpha$, and for all $\theta \neq \theta_0$, $\inf_\mu \nu^*(\theta,\mu) \geq \nu^H(\theta; M, \alpha)$.

\begin{proposition}
\label{prop:efficiency2}
	For any $H$, if $\alpha \to 0, M \to \infty$ such that $M = O(\log(\alpha^{-1}))$, we have $\phi^H(M, \alpha) \to 1$.
\end{proposition}
\begin{proof}{Proof.}
	Fix $\mu = \mu^*$ arbitrarily.  Then any $(T^*,\delta^*)$ satisfying the above conditions is also feasible for testing $H_0 : \theta = \theta_0, \mu = \mu^*$ against $H_1 : \theta \neq \theta_0, \mu = \mu^*$.  The result follows by Theorem \ref{thm:efficiency}.\hfill$\Box$
\end{proof}

Now we consider any prior for the pair $(\theta,\mu)$ under $H_1$, such that $\theta \sim N(0,\tau^2)$ marginally.  For normal mixtures $H = N(0,\gamma^2)$, let $\rho_\gamma(M,\alpha)$ be the average power and relative run-time over this prior.
\begin{proposition}
\label{prop:optimal_mixture2}
	To leading order as $\alpha \to 0, M \to \infty, M = O(\log(\alpha^{-1}))$, $\rho_\gamma$ is minimized by
	$$ \gamma^{2*} = \tau^2 \frac{ \Phi(-b) }{ \frac{1}{b} \phi(b) - \Phi(-b)} $$
	where $b = \left( \frac{2 \sigma^2 \log \alpha^{-1} }{M \tau^2} \right)^{1/2}$.
\end{proposition}
\begin{proof}{Proof.}
	Immediate from Theorem \ref{thm:optimal_mixture}.
\hfill$\Box$
\end{proof}

Now we compare the truncated mSPRT to the fixed-horizon t-test based on the difference between the sample means in the two streams, which is calibrated to have the same average power on this prior.  Noting that the fixed-horizon sample size does not depend on $\mu$, we see that Proposition \ref{prop:improvement} carries over to two-stream normal data.

\section{Multiple testing}
\label{sec:supp_multiple}

\subsection{Commutativity with always valid p-values}

\begin{proposition}
\label{prop:Bonferroni}
Let $(p_n^i)_{i = 1}^m$ be always valid p-values, and let $T$ be an arbitrary stopping time.
Then the set of decisions obtained by applying Bonferroni to $\v{p}_T$ controls FWER at level $\alpha$.
\end{proposition}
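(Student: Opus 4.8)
The plan is to reduce the statement to a union bound over the true null hypotheses, controlling each term by the always validity of the corresponding coordinate process. Fix a parameter vector $\v{\theta}$ and let $I_0 = \{\, i : \theta^i = \theta_0^i \,\}$ be the set of true nulls. Since Bonferroni applied to $\v{p}_T$ rejects hypothesis $i$ precisely when $T < \infty$ and $p_T^i \le \alpha/m$ (errors being counted as zero on $\{T = \infty\}$ by the stated convention), the FWER event under $\P_{\v{\theta}}$ is contained in $\bigcup_{i \in I_0} \{ T < \infty,\, p_T^i \le \alpha/m \}$, so it suffices to show $\P_{\v{\theta}}(T < \infty,\, p_T^i \le \alpha/m) \le \alpha/m$ for each fixed $i \in I_0$ and then sum.

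For a single such $i$, I would first note that on $\{T < \infty\}$ the value $p_T^i$ equals $p_n^i$ for $n = T$, so that
\[
\{ T < \infty,\, p_T^i \le \alpha/m \} \;\subseteq\; \{\, \exists\, n < \infty \,:\, p_n^i \le \alpha/m \,\} \;=\; \{ T_i < \infty \},
\qquad T_i := \inf\{\, n : p_n^i \le \alpha/m \,\}.
\]
The crucial gain is that $T_i$ is a stopping time with respect to the filtration generated by the $i$-th data stream alone, so always validity of $(p_n^i)$ applies to it directly: $\P_{\theta_0^i}(p_{T_i}^i \le \alpha/m) \le \alpha/m$, and since $p_{T_i}^i \le \alpha/m$ holds on $\{T_i < \infty\}$, this yields $\P_{\theta_0^i}(T_i < \infty) \le \alpha/m$. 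Finally, $\{T_i < \infty\}$ is measurable with respect to the $i$-th stream, whose marginal law under $\P_{\v{\theta}}$ coincides with the $i$-th null law because $\theta^i = \theta_0^i$; hence $\P_{\v{\theta}}(T_i < \infty) = \P_{\theta_0^i}(T_i < \infty) \le \alpha/m$. Summing over $i \in I_0$ gives the bound $|I_0|\,\alpha/m \le \alpha$, and taking the maximum over $\v{\theta}$ gives $\FWER \le \alpha$.

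The step I expect to require the most care is the passage from $\{p_T^i \le \alpha/m\}$ to $\{T_i < \infty\}$: $T$ is a stopping time only for the \emph{joint} filtration of all $m$ streams, so the always validity of $(p_n^i)$---which is a statement about stopping times of stream $i$---cannot be invoked for $T$ itself. Enlarging the event to $\{T_i < \infty\}$, which is governed by the within-stream hitting time $T_i$, is exactly what makes the argument go through, and it loses nothing because the hitting time is the extremal stopping time for any always valid p-value. It is worth emphasizing that no independence across experiments is used---only that each stream's marginal law is indexed by its own parameter---which mirrors the classical robustness of Bonferroni to arbitrary dependence, and the same template should drive the companion result for the general form of Benjamini--Hochberg.
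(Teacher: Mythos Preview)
Your proof is correct and is essentially the same union-bound argument the paper uses, but you carry out one step more carefully. The paper's proof simply asserts that, for each truly null $i$, $p_T^i$ is marginally super-uniform (by always validity), and then invokes the standard fixed-horizon Bonferroni result for correlated p-values. You instead unpack the super-uniformity claim: since $T$ is a stopping time only for the \emph{joint} filtration while always validity of $(p_n^i)$ is stated for the $i$-th stream, you enlarge $\{T<\infty,\,p_T^i\le\alpha/m\}$ to $\{T_i<\infty\}$ with $T_i$ the within-stream hitting time, and then apply always validity to $T_i$. This hitting-time reduction is exactly what justifies the paper's terse assertion, and it works for arbitrary (not necessarily monotone) always valid p-value processes. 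Your remark that only the marginal-law assumption, not independence, is needed is also correct and matches the paper's use of the same reduction in the companion proof for BH-G.
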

\begin{proof}{Proof.}
For all $\v{\theta}$, the variables $p_T^1,...,p_T^m$ satisfy the property that, for the truly null hypotheses $i$ with $\theta^i_0 = \theta^i$, $p_T^i$ is marginally super-uniform.  Hence there is a vector of (correlated) fixed-horizon p-values with the same distribution as $\v{p}_T$, and so Bonferroni applied to the always valid p-values must control FWER.
\hfill$\Box$
\end{proof}

\begin{proposition}
\label{prop:bh_general}
Let $(p_n^i)_{i = 1}^m$ be always valid p-values, and let $T$ be an arbitrary stopping time.
The set of decisions obtained by applying the BH-G procedure to $\v{p}_T$ controls FDR at level $\alpha$.
\end{proposition}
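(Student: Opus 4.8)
The plan is to reduce Proposition~\ref{prop:bh_general} to the classical analysis of the step-up procedure under arbitrary dependence (Benjamini--Yekutieli). The key observation is that applying BH-G to the random vector $\v{p}_T=(p_T^1,\dots,p_T^m)$ is simply the ordinary BH-G procedure run on a particular (random) input, and the classical proof that BH-G controls FDR at level $\alpha m_0/m$ uses \emph{only} that each null coordinate is marginally super-uniform --- it never touches the joint law of the coordinates. Always validity supplies exactly this ingredient: for every index $i$ with $\theta^i=\theta_0^i$ and every stopping time $T$, $\P_{\v{\theta}}(p_T^i\le s)\le s$ for all $s\in[0,1]$. Thus the real work is to verify that the combinatorial core goes through verbatim with $p_T^i$ in place of a fixed-horizon p-value, and to dispatch the convention on $\{T=\infty\}$.

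Concretely: fix a configuration $\v{\theta}$, let $\mc{H}_0=\{i:\theta^i=\theta_0^i\}$, $m_0=|\mc{H}_0|$, and $c_m=\sum_{r=1}^m 1/r$. On $\{T=\infty\}$ the FDP is zero by convention, so it suffices to bound $\E_{\v{\theta}}[\,\FDP\cdot\ind\{T<\infty\}\,]$, and on $\{T<\infty\}$ everything is well defined. Writing $R$ for the number of BH-G rejections from $\v{p}_T$ and $V$ for the number of false ones, $\FDP=V/(R\vee 1)$ and $\E_{\v{\theta}}[\FDP]=\sum_{i\in\mc{H}_0}\E_{\v{\theta}}[\ind\{i\text{ rejected}\}/(R\vee 1)]$. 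With $t_k=\alpha k/(m c_m)$ and $t_0=0$, the step-up rule with $R=k$ rejects precisely the hypotheses whose p-value is $\le t_k$, so $\ind\{i\text{ rejected},R=k\}=\ind\{p_T^i\le t_k,\,R=k\}$; expanding $\ind\{p_T^i\le t_k\}=\sum_{l=1}^k\ind\{t_{l-1}<p_T^i\le t_l\}$, exchanging the sums over $k$ and $l$, and using $\sum_{k\ge l}\ind\{R=k\}/k\le 1/l$ gives
\[ \E_{\v{\theta}}\!\left[\frac{\ind\{i\text{ rejected}\}}{R\vee 1}\right]\ \le\ \sum_{l=1}^m\frac1l\,\P_{\v{\theta}}\!\left(t_{l-1}<p_T^i\le t_l\right). \]
An Abel summation using the telescoping bound $\sum_{j=1}^l\P_{\v{\theta}}(t_{j-1}<p_T^i\le t_j)=\P_{\v{\theta}}(p_T^i\le t_l)\le t_l$ --- the single place where always validity of $(p_n^i)$ enters --- collapses the right-hand side to $\alpha/m$. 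Summing over $i\in\mc{H}_0$ yields $\E_{\v{\theta}}[\FDP]\le\alpha m_0/m\le\alpha$, and maximizing over $\v{\theta}$ gives $\FDR\le\alpha$.

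I expect the only genuinely delicate point to be the interface between always validity and multiplicity. The definition in the excerpt quantifies over stopping times of a single experiment's filtration, whereas here $T$ may be a stopping time of the joint filtration generated by all $m$ streams, yet we still need $\P_{\v{\theta}}(p_T^i\le s)\le s$ for the true nulls $i$. I would discharge this by noting that our always valid p-values are built from null martingales, which remain martingales --- hence the p-value remains always valid --- under the enlarged filtration whenever the other streams are independent of stream $i$; more generally this is precisely what it should mean for the $p^i$ to be \emph{jointly} always valid. Everything else is routine bookkeeping inherited from the fixed-horizon argument: the self-referential definition of $R$, ties among the $p_T^i$, and the event $\{p_T^i=0\}$.
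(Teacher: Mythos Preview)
Your proposal is correct and follows essentially the same route as the paper: reduce to the Benjamini--Yekutieli decomposition, observe that the only distributional input needed is marginal super-uniformity of each null $p_T^i$, and supply that via always validity. The paper isolates your Abel-summation step as a separate variational lemma (Lemma~\ref{lemma:bhq_log_bound}), showing $\sup_F \sum_{k=1}^m k^{-1}\big(F(k\alpha/m)-F((k-1)\alpha/m)\big)=(\alpha/m)\sum_{k=1}^m 1/k$ over super-uniform $F$, and then applies it with $\alpha$ replaced by $\alpha/c_m$; your inline Abel summation with $t_l=\alpha l/(m c_m)$ is the same computation carried out directly. Your flagged subtlety about $T$ being a stopping time of the \emph{joint} filtration is real; the paper handles it exactly as you suggest, asserting that $p_T^i$ is marginally super-uniform for true nulls (cf.\ the proof of Proposition~\ref{prop:Bonferroni}) without spelling out the enlarged-filtration martingale argument.
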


We cannot use the same proof here as for Proposition \ref{prop:Bonferroni}, because Theorem 1.3 in \cite{benjamini2001control}, which establishes that BH-G controls FDR under arbitrary correlation, requires that the fixed-horizon p-values be strictly uniform (rather than super-uniform). Instead, we rely on the following lemma.

\begin{lemma}
\label{lemma:bhq_log_bound}
\[ \sup_{f \in \cF} \sum_{k=1}^m \frac{1}{k} \int_{(k-1) \alpha / m}^{k\alpha / m} f(x) dx = \frac{\alpha}{m} \sum_{k=1}^m \frac{1}{k} \]
where $ \cF = \{ f:[0,1] \rightarrow \reals_{+} \ : \ F(x) = \int_0^x f(x) dx \leq x \ , \ F(1) = 1 \} $, $m \geq 1$, and $0 \leq \alpha \leq 1$.
\end{lemma}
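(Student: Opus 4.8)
The plan is to regard the quantity as a linear functional of the ``interval masses'' $a_k := F(k\alpha/m) - F((k-1)\alpha/m) = \int_{(k-1)\alpha/m}^{k\alpha/m} f(x)\,dx \ge 0$, and to exploit that the weights $1/k$ are \emph{decreasing} in $k$ while the constraint $F(x)\le x$ caps the partial sums $S_k := \sum_{j=1}^k a_j = F(k\alpha/m) \le k\alpha/m$. Writing $\Sigma(f) = \sum_{k=1}^m \frac1k a_k$, I would first prove the upper bound $\Sigma(f) \le \frac{\alpha}{m}\sum_{k=1}^m \frac1k$ for every $f\in\cF$, and then exhibit the uniform density to show this bound is attained, so that the stated supremum is in fact a maximum.

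For the upper bound I would apply summation by parts with $c_k := 1/k$, using $S_0 = 0$:
\[
\Sigma(f) \;=\; \sum_{k=1}^m c_k (S_k - S_{k-1}) \;=\; \sum_{k=1}^{m-1}(c_k - c_{k+1})\,S_k \;+\; c_m S_m .
\]
Since $c_k - c_{k+1} = \frac{1}{k(k+1)} \ge 0$, since $c_m > 0$, and since $S_k = F(k\alpha/m) \le k\alpha/m$ for every $k$ (in particular $S_m = F(\alpha) \le \alpha$), this gives
\[
\Sigma(f) \;\le\; \sum_{k=1}^{m-1}(c_k - c_{k+1})\tfrac{k\alpha}{m} \;+\; c_m\,\alpha \;=\; \tfrac{\alpha}{m}\Bigl(\sum_{k=1}^{m-1} k(c_k - c_{k+1}) + m c_m\Bigr).
\]
A short telescoping computation — expand $\sum_{k=1}^{m-1} k c_k - \sum_{k=1}^{m-1} k c_{k+1}$, reindex the second sum to $\sum_{k=2}^{m}(k-1)c_k$, and recombine with $mc_m$ — collapses the bracket to $\sum_{k=1}^m c_k = \sum_{k=1}^m \frac1k$, which is exactly the claimed value.

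For attainment I would take $f \equiv 1$ on $[0,1]$: then $F(x) = x$, so both $F(x)\le x$ and $F(1)=1$ hold and $f\in\cF$; each $a_k$ equals $\alpha/m$, so $\Sigma(f) = \frac{\alpha}{m}\sum_{k=1}^m \frac1k$, matching the upper bound. Combining the two halves yields the lemma. There is no real obstacle here: the only things needing care are the summation-by-parts bookkeeping and the telescoping identity for the bracketed sum, together with checking the trivial case $m=1$ (both sides equal $\alpha$) and that $\cF$ is nonempty (the uniform density).
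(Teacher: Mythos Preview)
Your proof is correct and is essentially the same as the paper's. The paper also rewrites the objective in terms of $F_k = F(k\alpha/m)$, applies the Abel/summation-by-parts rearrangement to obtain $\sum_{k=1}^{m-1}\frac{1}{k(k+1)}F_k + \frac{1}{m}F_m$, and then observes that this is maximized at $F_k = k\alpha/m$; your version is slightly more explicit in verifying attainment via $f\equiv 1$ and in spelling out the telescoping, but the argument is the same.
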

\begin{proof}{Proof.}
Since $f \in \cF$ are bounded, we restate the optimization in terms of $F_k = F(\frac{k\alpha}{m})$, and $F_0 \equiv 0$,
\begin{equation} \begin{split} \nonumber
\sup_{F_1 , \dots , F_m } &\sum_{k=1}^m \frac{1}{k} ( F_k - F_{k-1} ) \\
\text{subject to} & \,\,\,\, 0 \leq F_j \leq \frac{k\alpha}{m} \ , \ F_k \geq F_{k-1} \ k = 1 ,\dots, m.
\end{split} \end{equation}
The objective can be rearranged as
\[ \sum_{k=1}^{m-1} \frac{1}{k(k+1)} F_k + \frac{1}{m} F_m \]
which is clearly maximized by $F_k = \frac{k\alpha}{m}$ for all $k$.
\hfill$\Box$
\end{proof}

\begin{proof}{Proof of Proposition \ref{prop:bh_general}.}
Adapting the proof given in \cite{benjamini2001control} now is straight-forward.  Translating into the sequential notation of this paper, the only non-immediate step is to show
\begin{equation} \begin{split} \nonumber
 &\sum_{k=1}^m \frac{1}{k} \sum_{r=k}^m \P \left( T_k^i \leq T < T_{k-1}^i \ , \ T_r \leq T < T_r^{+} \ , \ T \leq \infty \right) \\
 \leq &\sum_{k=1}^m \frac{1}{k} \P \left( \frac{(k-1)\alpha}{m} \leq p_T^i \leq \frac{k\alpha}{m} \right) \leq \frac{\alpha}{m} \sum_{k=1}^m \frac{1}{k} 
 \end{split}
\end{equation}
for all truly null hypotheses $i$. The first inequality is a restatement of definitions, and the second follows from Lemma \ref{lemma:bhq_log_bound} since by always-validity $p_T^i$ is super-uniform.
\hfill$\Box$
\end{proof}

\begin{proof}{Proof of Theorem \ref{thm:sequential_fdr}.}
We assume wlog that the truly null hypotheses are $i = 1,\ldots,m_0$.  Letting $V_n$ denote the number of true null rejected at $n$, the FDR can be expanded as
\begin{equation} \nonumber
\E \left( \sum_{r=1}^m \ \frac{1}{r} \ V_T \ 1_{ \{ T_r \ \leq \ T < T_r^{+} \} }  1_{T < \infty}  \right) = \sum_{i=1}^{m_0} \sum_{r=1}^m \frac{1}{r} \P \left( T_r^i \leq T \ , \ T_r \ \leq \ T < T_r^{+} \ , \ T < \infty \right).
\end{equation}
Note that the sets $ \{ T_r \ \leq \ T < T_r^{+} \} $ are disjoint and cover any location of $T$. Consider the terms in the sum over $i \in I$ and $i \notin I$ separately. For $i \notin I$, we bound the probability in the third equality by
\begin{equation} \nonumber
\begin{split}
\P \left( T_r^i \leq T \ , \ T < \infty \right)& \P \left(  T_r  \leq T < T_r^{+} \ \Big| \  T_r^i \leq T \ , \ T < \infty \right) 
 \leq \frac{\alpha r}{M}  \  \P \left(  T_r \leq T < T_r^{+} \ \Big| \  T_r^i \leq T \ , \ T < \infty \right) \\
 &= \frac{\alpha r}{M}  \  \P \left( (T_{r-1})^{-i}_0 \ \leq \ T < (T_{r-1})^{-i+}_0 \ \Big| \  T_r^i \leq T \ , \ T < \infty \right)
\end{split}
\end{equation}
where the first inequality follows from always-validity of sequential p-values, and the last equality because the modified BH procedure on the $m-1$ hypothesis other than the $i$th makes equivalent rejections at time $T$ when $T_r^i \leq T$. 

For $i \in I$, arguing as in the proof of Proposition \ref{prop:bh_general} shows
\begin{equation} \nonumber
\sum_{r=1}^m \frac{1}{r} \P \left( T_r^i \leq T \ , \ T_r \ \leq \ T < T_r^{+} \ , \ T < \infty \right) \leq \frac{\alpha}{m} \sum_{k=1}^m \frac{1}{k}. 
\end{equation}
The proof is completed on application of (\ref{eq:seq_fdr_requirement}) to the terms in the first expansion with $i \notin I$ and re-ordering of the resulting terms.
\hfill$\Box$
\end{proof}

BH-I does not commute with always validity:

\begin{example}
Let $m=4$, with $p_{i,n}$, $i \leq 3$, be a.s. constant across $n$ with $p_{1,1} \sim U(0,1)$, and let $p_{4,1} = 1, p_{2,n} = 0$ for $n \geq 2$. These are feasible always valid p-value processes when the 1st  3 hypotheses are null and the 4th is non-null.  Consider the following stopping time: $T = 1$ if any null hypothesis is less than $\alpha/4$, any two nulls are less than $\alpha /2$, or all three nulls are less than $3 \alpha / 4$, and otherwise $T=2$. Straightforward calculation shows the standard BH procedure applied to $\v{p}_T$ gives an FDR of $\alpha + \frac{\alpha}{16} ( 2 - 9 \alpha + 45 \alpha^2 ) > \alpha$ for all $0 < \alpha < 1$.
\end{example}

%
%

\subsection{q-values}

For Bonferroni, the q-values are given by
$$ q^i = (p^i m) \wedge 1. $$
For BH with independence or general dependence respectively,
$$ q^{(j)} = \min_{k \geq j} \left ( \frac{p^{(k)} m} {k} \right ) \wedge 1 \,\, \text{or} \,\, \min_{k \geq j} \left ( \frac{p^{(k)} m \sum_{r = 1}^m 1/r}{k} \right ) \wedge 1. $$
The q-values for both Bonferroni and BH-I are currently displayed on the industry platform in different contexts.

\subsection{FCR control}

\begin{proof}{Proof of Theorem \ref{thm:fixed_horizon_FCR}.}
By Lemma 1 in \cite{benjamini2005false},
$$ FCR = \sum_{i = 1}^m \sum_{r = 1}^m \frac{1}{r} \P(|J \cup S^{BH}| = r, i \in J \cup S^{BH}, \theta^i \notin \tilde{CI}^i) $$
On the event $i \in J \cup S^{BH}$, there are two possibilities.  If $i \in S^{BH}$, we can say $R^{BH} \leq |J \cup S_{BH}|$.  If $i \notin S^{BH}$, we can say further that $R^{BH} + 1 \leq |J \cup S_{BH}|$.  In either case, it follows that $CI^i(1 - \alpha |J \cup S_{BH}|/m) \subset \tilde{CI}^i$, and so the FCR is at most
$$ \sum_{i = 1}^m \sum_{r = 1}^m \frac{1}{r} \P(|J \cup S^{BH}| = r, i \in J \cup S^{BH}, \theta^i \notin CI^i(1-\alpha r / m)) $$
\begin{enumerate}
\item[] {\em Case 1}: 
$i \notin J$.
\begin{align*}
\{& |J \cup S^{BH}| = r, i \in J \cup S^{BH}, \theta^i \notin CI^i(1-\alpha r / m) \}\\
&= \{ |J \cup (S^{BH})_0^{-i}| = r-1, p^i \leq \alpha r / m, \theta^i \notin CI^i(1- \alpha r / m) \}\\
&\subset \{ |J \cup (S^{BH})_0^{-i}| = r-1, \theta^i \notin CI^i(1- \alpha r / m) \}
\end{align*}
These two events are independent, so
\begin{align*}
&\sum_{r = 1}^m \frac{1}{r} \P(|J \cup S^{BH}| = r, i \in J \cup S^{BH}, \theta^i \notin CI^i(1-\alpha r / m))\\
&\leq \sum_{r = 1}^m \frac{1}{r} \P(|J \cup (S^{BH})_0^{-i}| = r-1) \P(\theta^i \notin CI^i(1- \alpha r / m))\\
&\leq \frac{\alpha}{m} \sum_{r = 1}^m \P(|J \cup (S^{BH})_0^{-i}| = r-1) = \frac{\alpha}{m}
\end{align*}
\item[] {\em Case 2}: 
$i \in J$.
\begin{align*}
&\sum_{r = 1}^m \frac{1}{r} \P(|J \cup S^{BH}| = r, i \in J \cup S^{BH}, \theta^i \notin CI^i(1-\alpha r / m))\\
&\leq \sum_{r = 1}^m \frac{1}{r} \P(|J \cup S^{BH}| = r, \theta^i \notin CI^i(1-\alpha r / m))\\
&= \sum_{r = 1}^m \frac{1}{r} \P(|J \cup S^{BH}| = r \, | \, \theta^i \notin CI^i(1-\alpha r / m)) \P(\theta^i \notin CI^i(1-\alpha r / m))\\
&\leq \frac{\alpha}{m} \sum_{r = 1}^m \P(|J \cup S^{BH}| = r \, | \, \theta^i \notin CI^i(1-\alpha r / m))
\end{align*}
Since $S^{BH}$ is a function only of the p-values and the data streams are independent, the events $\{ |J \cup S^{BH}| = r \}$ and $\{ \theta^i \notin CI^i(1-\alpha r / m)) \}$ are conditionally independent given $p^i$.  Hence,
$$ \P(|J \cup S^{BH}| = r \, | \, \theta^i \notin CI^i(1-\alpha r / m)) \leq \max_\rho \P(|J \cup S^{BH}| = r \, | \, p^i = \rho) $$
It is easily seen that this maximum must be attained at either $\rho = 0$ or $\rho = 1$, so
\begin{align*}
\P(|J \cup S^{BH}| = r \, | \, \theta^i &\notin CI^i(1-\alpha r / m))\\
&\leq \P(|J \cup S^{BH}| = r \, | \, p^i = 0) + \P(|J \cup S^{BH}| = r \, | \, p^i = 1)\\
&= \P(|J \cup (S^{BH})_0^{-i} \backslash i | = r-1) + \P(|J \cup (S^{BH})_1^{-i} \backslash i | = r-1)
\end{align*}
Thus
\begin{align*}
&\sum_{r = 1}^m \frac{1}{r} \P(|J \cup S^{BH}| = r, i \in J \cup S^{BH}, \theta^i \notin CI^i(1-\alpha r / m))\\
&\leq \frac{\alpha}{m} \left \{ \sum_{r = 1}^m \P(|J \cup (S^{BH})_0^{-i} \backslash i | = r-1) + \sum_{r = 1}^m \P(|J \cup (S^{BH})_1^{-i} \backslash i | = r-1) \right \}\\
&= \frac{2\alpha}{m}
\end{align*}
\end{enumerate}
Summing over all $i$ now gives the desired result.
\hfill$\Box$
\end{proof}

\begin{proof}{Proof of Theorem \ref{thm:sequential_FCR}.}
By the same argument as in Theorem \ref{thm:fixed_horizon_FCR}, we find that the FCR is at most
$$ \sum_{i = 1}^m \sum_{r = 1}^m \frac{1}{r} \P(|J \cup S^{BH}_T| = r, i \in J \cup S^{BH}_T, \theta^i \notin CI^i_T(1-\alpha r / m), T < \infty) $$
\begin{enumerate}
\item[] {\em Case 1}:
$i \notin J$. As in Theorem \ref{thm:fixed_horizon_FCR}, we obtain
\begin{align*}
&\sum_{r = 1}^m \frac{1}{r} \P(|J \cup S^{BH}_T| = r, i \in J \cup S^{BH}_T, \theta^i \notin CI^i_T(1-\alpha r / m), T < \infty)\\
&\leq \sum_{r = 1}^m \frac{1}{r} \P(|J \cup (S^{BH}_T)_0^{-i}| = r-1, \theta^i \notin CI^i_T(1- \alpha r / m), T < \infty)\\
&= \sum_{r = 1}^m \frac{1}{r} \P((T_{r-1})^{-i, J}_0 \leq T < (T_{r-1})^{-i, J+}_0, T_r^{i, \theta^i} \leq T < \infty)\\
&\leq \frac{\alpha}{m} \sum_{r = 1}^m \P((T_{r-1})^{-i, J}_0 \leq T < (T_{r-1})^{-i, J+}_0 | T_r^{i, \theta^i} \leq T < \infty)\\
&\leq \frac{\alpha}{m}
\end{align*}
\item[] {\em Case 2}:
$i \in J$.  As before,
\begin{align*}
\sum_{r = 1}^m \frac{1}{r} &\P(|J \cup S_T^{BH}| = r, i \in J \cup S_T^{BH}, \theta^i \notin CI_T^i(1-\alpha r / m))\\
&\leq \frac{\alpha}{m} \sum_{r = 1}^m \P(|J \cup S_T^{BH}| = r \, | \, \theta^i \notin CI_T^i(1-\alpha r / m))\\
&= \frac{\alpha}{m} \sum_{r = 1}^m \P(|J \cup S_T^{BH}| = r \, | \, T_r^{i, \theta^i} \leq T < \infty)\\
&\leq \frac{\alpha}{m} \sum_{r = 1}^m \max_{\rho} \P(|J \cup S_T^{BH}| = r \, | \, p_T^i = \rho, T_r^{i, \theta^i} \leq T < \infty)\\
&\leq \frac{\alpha}{m} \left \{ \sum_{r = 1}^m \P(|J \cup (S^{BH}_T)_0^{-i} \backslash i | = r-1 \, | \, T_r^{i, \theta^i} \leq T < \infty) + \sum_{r = 1}^m \P(|J \cup (S^{BH}_T)_1^{-i} \backslash i | = r-1 \, | \, T_r^{i, \theta^i} \leq T < \infty) \right \}\\
&= \frac{\alpha}{m} \Big \{ \sum_{r = 1}^m \P((T_{r-1})^{-i, J}_0 \leq T < (T_{r-1})^{-i, J+}_0 \, | \, T_r^{i, \theta^i} \leq T < \infty)\\
&\,\,\,\,\,\,\,\,\,\,\,\,\,\,\,\,\,\,\,\,\,\,\,\,\,\,\,\,\,\,\,\,\,\,\,\,\, + \sum_{r = 1}^m \P((T_{r-1})^{-i, J}_1 \leq T < (T_{r-1})^{-i, J+}_1 \, | \, T_r^{i, \theta^i} \leq T < \infty) \Big \}\\
&\leq \frac{2\alpha}{m}
\end{align*}
\end{enumerate}
Finally we sum over $i$.
\hfill$\Box$
\end{proof}

\end{APPENDICES}

\end{document}